\definecolor{red}{rgb}{1.0,0.0,0.0}
\definecolor{blu}{rgb}{0.0,0.0,1.0}
 \definecolor{gre}{rgb}{0.0,0.5,0.2}
\def\gre#1{{\textcolor{gre}{#1}}}
\newtheoremstyle{mytheorem}
{6pt}
{6pt}
{\itshape}
{-0pt}
{\large \scshape}
{}
{1em}
{}
\newtheoremstyle{myremark}
{6pt}
{10pt}
{\rm}
{-0pt}
{\large \scshape}
{}
{1em}
{}
\theoremstyle{mytheorem}
\newtheorem{Theorem}{Theorem}
\newtheorem{Definition}{Definition}
\newtheorem{Proposition}{Proposition}
\newtheorem{Lemma}{Lemma}
\newtheorem{Corollary}{Corollary}
\newtheorem*{Hypothesis}{Hypothesis}
\theoremstyle{myremark}
\newtheorem{Remark}{Remark}
\newtheorem{Example}{Example}
\newlength{\bibitemsep}\setlength{\bibitemsep}{.2\baselineskip plus .05\baselineskip minus .05\baselineskip}
\newlength{\bibparskip}\setlength{\bibparskip}{-2pt}
\let\oldthebibliography\thebibliography
\renewcommand\thebibliography[1]{
	\oldthebibliography{#1}
	\setlength{\parskip}{\bibitemsep}
	\setlength{\itemsep}{\bibparskip}
}
\begin{document}
	\title[Centralized and Competitive Extraction for Distributed Renewable Resources]{Centralized and Competitive Extraction for Distributed Renewable Resources with Nonlinear Reproduction} 
	\author[F.De Feo]{Filippo De Feo$^{**}$}
    \address{Institut f\"ur Mathematik, Technische Universit\"at Berlin, Berlin, Germany}
    \email{defeo@math.tu-berlin.de}
    \author[G.Fabbri]{Giorgio Fabbri$^*$}
	\address{$^*$Univ. Grenoble Alpes, CNRS, INRAE, Grenoble INP, GAEL, 38000 Grenoble, France}
	\email{giorgio.fabbri@univ-grenoble-alpes.fr}
	\author[S.Faggian]{Silvia Faggian$^\dag$}
	\address{$^\dag$Department of Economics, Ca' Foscari University of Venice,
		Italy}
	\email{faggian@unive.it}
	\author[G. Freni]{Giuseppe Freni$^\ddag$}
	\address{$^\ddag$Department of Business and Economics, Parthenope University
		of Naples, Italy}
	\email{giuseppe.freni@uniparthenope.it}
	
	\begin{abstract}

We study optimal and strategic extraction of a renewable resource that is distributed over a network, migrates mass-conservatively across nodes, and evolves under nonlinear (concave) growth. A subset of nodes hosts extractors while the remaining nodes serve as reserves. We analyze a centralized planner and a non-cooperative game with stationary Markov strategies. The migration operator transports shadow values along the network so that Perron–Frobenius geometry governs long-run spatial allocations, while nonlinear growth couples aggregate biomass with its spatial distribution and bounds global dynamics. For three canonical growth families, logistic, power, and log-type saturating laws, under related utilities, we derive closed-form value functions and feedback rules for the planner and construct a symmetric Markov equilibrium on strongly connected networks. To our knowledge, this is the first paper to obtain explicit policies for spatial resource extraction with nonlinear growth and, a fortiori, closed-form Markov equilibria, on general networks.

\bigskip
		
		\noindent\textit{Keywords}: Harvesting, spatial models, differential games,
		nature reserves.  \newline
		\noindent\textit{JEL Classification}: Q20, Q28, R11, C73
		 
	\end{abstract}
	
	\maketitle

\section{Introduction}






Many renewable resources are distributed across space and move between locations, so that extraction at one site affects stocks and payoffs elsewhere. Classical bioeconomic models describe aggregate dynamics for a single homogeneous stock (for example, \citealp{Gordon1954,Schaefer1954}), while a more recent large spatial literature shows how migration and connectivity across patches shape both efficiency and dynamics. This is the concept of \textit{metapopulations}: multiple sub-populations linked by dispersal, highlighting how patch connectivity and migration affect species persistence, see \cite{Hanski1999}.

The most standard example of space-distributed natural resource is fish, but other empirically relevant examples are also groundwater stored in hydraulically connected aquifers, shared oil and gas reservoirs with pressure–driven flows, forest resources with seed dispersal and pest spread, and even ``non–natural'' stocks such as pollutants that drift and diffuse from their sources. In all these cases, the global mass of the resource and its spatial allocation jointly determine marginal productivities, incentives, and long–run outcomes, generating spatial externalities that standard non–spatial models cannot capture \citep[e.g.][]{janmaat2005sharing}.

The idea of metapopulation (or spatially distributed resource) was first used in the context of optimal spatial resource management by \cite{SCW05} (see also \citealp{BhatHuffaker07, SSW09, XE10, BXEY14} and the stochastic counterpart by \citealp{CostPol08}).
These first contributions did not model strategic interaction, but questions about the ``tragedy of the commons'' in metapopulations or spatial resource games, and the related policy implications, quickly arose: \cite{KaffineCostello11} provide an early game-theoretic model of a metapopulation fishery with territorial use rights for fishing, while \cite{CTQ15} build on this with the concept of ``partial enclosure.'' Extensions and related contributionsinclude, for instance,   \cite{HerreraEtAl2016, CostelloKaffine18, CostelloEtAl2019, de2019spatial}\footnote{It is also worth mentioning \cite{Ilkilic2011}, who studies equilibrium extraction on a network in the static case.}.

Linear or linearized stock dynamics are analytically convenient in spatial control/games problems because they often permit closed-form (or nearly closed-form) characterizations of optimal policies when aggregation is tractable, yet many renewable stocks exhibit density dependence and environmental constraints that are inherently nonlinear. For instance: concave growth   (surplus–production)   functions can capture both diminishing marginal biological productivity at higher stock levels as in the Schaefer surplus–production framework and its descendants \citep{Schaefer1957,HilbornWalters1992,Clark2010}, and carrying–capacity constraints can reproduce saturation effects that pin down the long–run stock around a capacity level (as in \cite{HilbornWalters1992,Clark2010});  threshold phenomena can arise, for instance at low densities, with recruitment or survival falling disproportionately (Allee effects), so generating tipping behaviour, multiple steady states, and extinction traps \citep{CourchampBerecGascoigne2008}. 

In networked models with linear stock dynamics (linear growth and linear migration), Perron–Frobenius-like properties of the involved matrices yield spectral solutions: in \citet{FFF24} (see also \citealp{FFF20}) both the planner and the Markovian game admit affine feedbacks proportional to weighted aggregate stock \footnote{The weights are given by the dominant right eigenvector of a matrix combining the regional productivities and the migration flows.}, and the long-run spatial allocation is governed by the left dominant eigenvector of the matrix obtained subtracting the feedbacks coefficients from the flow matrix. Related linear/linearized network formulations obtain equally transparent solutions or sharp comparative statics by leveraging the same spectral structure \citep{calvia2024optimal, FFF25}\footnote{Other related contributions include dynamic optimization or differential game models for other economic systems with spatial structure, for example, growth models with distributed capital that are counterparts of benchmark models such as Ramsey or AK, either in continuous space (\citealp{Brito04, BCF13, fabbri2016geographical, SXY17, boucekkine2019growth, brito2022dynamics, ricci2024non}) or on networks (\citealp{albeverio2022large}), as well as dynamic economic models with negative spatial externalities related to pollution diffusion (\citealp{de2019spatial, de2022investment, boucekkine2022managing, boucekkine2022dynamic}). The explicit result of this paper is a novelty even when compared with this literature.}. 


Here, we develop a dynamic model of a spatially distributed renewable resource on a network with mass–conserving migration and, notably, with a \emph{nonlinear} (concave) growth. A subset of nodes hosts extractors who choose paths over time; the remaining nodes act as reserves. We analyze $(i)$ a centralized benchmark in which a planner maximizes discounted welfare (Section \ref{sec:SC}) and $(ii)$ a noncooperative game with stationary Markovian strategies (Section \ref{sec:game}). In both settings, the migration operator transports shadow values across the network, so Perron–Frobenius geometry continues to shape long–run spatial allocations; nonlinearity in growth, however, bounds aggregate dynamics and couples the global mass with the distribution of stocks across nodes. For each of three standard concave growth families (logistic/power/log–type saturating laws) \footnote{For these specifications in the case of a non-distributed resource see \cite{PY1989} and \cite{GL2007}.}, we obtain closed–form value functions and feedback rules for the planner (Theorem \ref{thm:OC}) and derive a symmetric Markovian equilibrium in the game (Theorem \ref{th:ME}). {To the best of our knowledge}, this is the first paper to deliver an explicit solution for optimal extraction of a spatially distributed resource {with nonlinear growth}, and, \emph{a fortiori}, the first to provide a closed–form Markov equilibrium under the same assumptions on a general (strongly connected) network. That said, our explicit formulas hinge on an aggregate growth function, on pairing, in each of the three cases, the nonlinear growth law with the corresponding utility (CRRA or log), on a strongly connected linear mass–conserving network and on symmetric extractors at the active nodes; we take this as a disciplined benchmark for subsequent extensions.

The remainder of the paper is organized as follows.
Section \ref{sec:model} introduces the model setup, including the network structure, resource dynamics, and agents' optimization problems.
Section \ref{sec:SC} derives the socially optimal solution, presenting the planner's HJB equation and the closed form optimal policy.
Section \ref{sec:game} analyzes the noncooperative game, characterizes the stationary Markov perfect equilibrium, and provides an explicit formula for the equilibrium extraction strategy at each node.
Section \ref{sec:conclusions} concludes with a summary of findings and directions for future research.
Appendix \ref{app:proofs} collects technical lemmas and proofs.

\section{The Model}  	\label{sec:model}

We consider a common property resource distributed across an area divided into subareas or regions. The resource's local quantities change over time and are \emph{mobile} between regions, in specified proportions. This model can represent various types of resources, such as livestock migrating across regional or national boundaries, or groundwater or oil supplies. For illustration, one can focus on fish that move between different regional waters. The overall area  is modeled generically as  a network , with $n$ nodes, each  representing a different region,   connected to one another by suitable inflow/outflow functions. A number $f\le n$ of agents  are assigned each to a single region for exclusive exploitation and decide the quantity of resource they extract in time.
 \subsubsection*{State and control variables.} 
Specifically, we denote by   $N=\{1, 2, ..., n\}$ the set of nodes, $F\subset N$ the subset where an agent is active, with the remaining $N\setminus F$ nodes used as reserves. Here $e_{i}$ is the $i$-th vector of the canonical basis on $\mathbb{R}^{n}$,   $\mathbf e =\sum_{i=1}^n e_i$ the unit vector,   $\left\langle \cdot ,\cdot \right\rangle $ is the inner product  in $\mathbb{R}^{n}$, and   $\mathbb{R}_+$ the set of nonnegative real values. For any $x\in\mathbb R^n$, we write $x\ge 0$ to mean $x_i\ge0$ for all $i\in N$, and $x>0$ to mean $x_i>0$ for all $i\in N$.

	For all $i\in N$, $X_{i}(t) $ stands for the mass at 	node $i$ at time $t$, and we set $X(t)=(X_{1}(t),\dots,X_{n}(t) )^\top$. The evolution in time of mass $X_{i}(t)$ on region $i$ depends on several factors:	
  the natural growth  $g_i(X(t))$ of the resource at node $i$;  the net inflow $\alpha_i(X(t))$ of the resource stock exchanged with the other nodes;   the rates of extraction $c_{i}(t)$ at time $t$ from region $i$, namely the decision variables of the problem,  chosen by the agents. 

The evolution of the system is then described by the set of ODEs 
\begin{equation}  \label{SE}
\begin{cases}
X^\prime_{i}(t)=g_{i}(X(t))+\alpha_i(X(t))-c_{i}(t), & 
t>0 \\ 
X_{i}(0)=x_{i}, & 
\end{cases}%
\end{equation}
subject to the nonnegativity constraints $c_{i}(t)\geq 0$, $X_{i}(t)\geq 0$,  for all $t\geq 0$.

We denote by $X(t;x\mathbf{,}c\left( \cdot \right) )$ the trajectory of the system starting at $x$ and driven by control $c(t)=(c_1(t), c_2(t), \cdots, c_n(t))^\top$.

We now specify further $g_i$ and $\alpha_i$. For the net inflow $\alpha_i$, we mainly consider the case of a   diffusion linear in the regional stocks, more specifically we make the following assumption.

\begin{Hypothesis}[N1] The stock exchanges between two nodes of the network are represented by the network's adjacency matrix $B$, with entries $b_{ij}\ge0$ for all $i,j$ in $N$, and  $b_{ii}=0$. 
The outflow from node $i$ to  node $j$ at time $t$ is represented by $b_{ij}X_{i}(t)$. \\The network  is assumed \emph{strongly connected}.    \end{Hypothesis}

Under this assumption one has 

$$\alpha_i(X(t))= \sum_{j=1,j\not=i}^{n}b_{ji}X_{j}(t)  - 
\sum_{j=1,j\not=i}^{n}b_{ij}X_{i}(t) =\left\langle Be_{i},X(t)\right\rangle
- \sum_{j=1,j\not=i}^{n}b_{ij}\; X_{i}(t),$$
If we set  
$\alpha(x)=(\alpha_1(x), \alpha_2(x), \cdots, \alpha_n(x))^\top$,  and $D$  the diagonal matrix of the  outflows
$$D=\begin{pmatrix} -\sum_{j=1,j\not=1}^{n}b_{1j}&0& 0&\cdots &0\\
0&-\sum_{j=1,j\not=2}^{n}b_{2j}&0&  \cdots&0\\
\cdots&\cdots&\cdots&  \cdots& \cdots&\\
0&0&0&  \cdots& -\sum_{j=1,j\not=n}^{n}b_{1j}&\\
\end{pmatrix} $$
then 
$$\alpha(x)= (D+B^\top)x, \ x\in\mathbb R^n.$$ 
Note that, under Hypothesis (N1) one has
\begin{equation}\label{eq:consmass}
\sum_{i=1}^n \alpha_i(X(t))=\langle \alpha (X(t)),\mathbf{e}\rangle=0,\quad \forall t\ge0.
\end{equation} 
meaning that the total mass of the reserve is conserved when moved across nodes.

\begin{Example}[Symmetric diffusion of Fick's type]\label{ex:Fick}
A useful special case of Hypothesis (N1) is a symmetric diffusive coupling in the spirit of Fick’s law. In this setting, the flow between two nodes is linear and proportional to the difference in their stocks.
Equivalently, let $W=(w_{ij})_{i,j\in N}$ be a nonnegative weight matrix with $w_{ij}=w_{ji}$ for $i\ne j$ and $w_{ii}=0$. 
Interpreting $w_{ij}$ as the intensity of exchange along edge $(i,j)$, the net inflow at node $i$ reads
\[
\alpha_i(x)\;=\;\sum_{j\ne i} w_{ji} (x_j-x_i).
\]
\end{Example}

\begin{Remark}\label{rem1}
      We will discuss a generalization of Hypothesis (N1), for possibly non-linear net flows $\alpha_i$, in Remark \ref{rem:nonlinearalpha}.
  \end{Remark}

\noindent As for the  \emph{growth function }$g_i$, we  assume  $g_i$ is  a real valued concave function  of one of the following types.

\begin{Hypothesis} For all $i\in N$
    $$
g_{i}(x)=x_{i}\ \varphi \left( \sum_{i=1}^{n}x_{i}\right)=x_{i}\ \varphi \left( \langle x,\mathbf e\rangle\right)
$$
where the \emph{saturation function} $\varphi $ can be chosen among the following: \begin{itemize}
\item[(S1)] $\varphi \left( m\right) =\Gamma \left( 1-\frac{1}{K}m^{\sigma
-1}\right) ,$  with $\sigma >1$, $\Gamma>0$ usato nel thm di verifica;

\item[(S2)] $\varphi \left( m\right) =m^{\sigma -1}-\delta,$   with $0<\sigma <1,$
and $\delta >0;$

\item[(S3)] $\varphi \left(m\right) =\Gamma \left( 1-\frac{1}{K}\ln m\right) ,$   with 
$\Gamma ,K\in \mathbb{R}.$
\end{itemize}
where $m$ varies in $(0,+\infty) $,  and $\Gamma,\ K,\ \delta,\ \sigma$ are given parameters, representing:  $\Gamma $  the growth rate of the resource,  $K$    the carrying capacity of the environment, $\delta$  the natural decay of the resource.
\end{Hypothesis} 

Overall, if we set $c(t)=(c_1(t), c_2(t), \cdots, c_n(t))^\top$, the vector of extractions chosen by agents,  system \eqref{SE} can be rewritten in vector form as
\begin{equation}\label{vectsys}\begin{cases}
X^\prime(t)=\varphi \left( \langle X(t),\mathbf e\rangle\right)X(t)+(D+B^\top)X(t)-c(t), &t\ge 0\\
X(0)=x.
\end{cases} 
\end{equation} 
  subject to the constraints 
\begin{equation}  \label{constr}
c_{i}(t)\geq 0,\,\ \ X_{i}(t)\geq 0,\text{ for all }t\geq 0.
\end{equation} 

We finally describe the agents and their utility. 

\begin{Hypothesis}
    Assume 
$\rho \in \mathbb{R}$ is the discount rate. We suppose that at each location $i\in F$ there is an agent with the following instantaneous utility:
\begin{itemize}
\item[(U1)] $\displaystyle u(c)=\frac{c^{1-\sigma}}{1-\sigma}$ \quad when the saturation function $\varphi$ satisfies (S1) or (S2);
\item[(U2)] $\displaystyle u(c)=\ln(c)$ \quad when the saturation function $\varphi$ satisfies (S3).
\end{itemize}
\end{Hypothesis}

We will analyse and compare two frameworks: 
\begin{itemize}
    \item [(F1)] In Section \ref{sec:SC} we will study the case of a unique ``Benthamite'' planner maximizing the sum of the utilities from the extraction of the resource at the different nodes 
\begin{equation}\label{utcontr}
J\left(c; x\right)=\int_{0}^{+\infty }e^{-\rho t}\sum_{i=1}^fu(c_{i}(t))dt  
\end{equation}
under the constraints \eqref{vectsys} and \eqref{constr};
\item[(F2)] In Section \ref{sec:game} we will look at the case of $f$ competing agents in a dynamic game, with agent $i$  maximizing with respect to $c_i$  \begin{equation}
\label{utility}
J_i\left(c_i ; x, c_{-i}\right)=\int_{0}^{+\infty }e^{-\rho t}u(c_{i}(t))dt  
\end{equation}
under \eqref{vectsys} and \eqref{constr}.

Note that in \eqref{utcontr} and \eqref{utility}, $x$ is the initial stock   and the notation $J\left(c ; x, c_{-i}\right)$, (respectively, $J_i\left(c_i ; x, c_{-i}\right)$) highlights the fact that $J$ (respectively,  $J_i$) depends implicitly on $x$ (respectively,  $x,c_{-i}$)  through the constraint given by the state equation \eqref{vectsys}.

\end{itemize}

\begin{Remark}
Our specification $g_i(x)=x_i\,\varphi(\langle x,\mathbf e\rangle)$ imposes that congestion/limits act at the level of the \emph{aggregate} mass rather than node-by-node. While several spatial bioeconomic models use patch-level crowding, there are empirically motivated situations where a nonlocal limiter is a natural structure. One is, for instance,  benthic filter-feeders in shallow estuaries and bays, where phytoplankton is transported and mixed at the system scale in shallow, well-mixed estuaries \citep{Officer1982}. In other biological systems, aggregation at shared reproductive sites also generates congestion effects at the aggregate rather than local level. For instance, many salmon populations exhibit strong natal homing: adults from large catchments return to a few specific spawning reaches, where density dependence acts through redd superimposition and limited gravel availability. In such cases, reproduction is constrained by the system-wide carrying capacity of spawning grounds rather than by patch-specific crowding \citep{WestleyQuinnDittman2013}. 


More generally, in continuous-space models it is standard to include nonlocal terms whereby reproduction or mortality at a point depends on spatial averages or integrals of the state (kernel interactions) (\citealp{Murray2003,OkuboLevin2001}). Our dependence on $m:=\langle x,\mathbf e\rangle$ can be interpreted as a specific  nonlocal situation that captures basin- or region-wide crowding.
\end{Remark}

\subsection{Primitives of the system} It will prove convenient to represent the trajectory of the system $X(t)$ as the product \begin{equation}\label{ym}
     X(t)=m(t)Y(t)
 \end{equation}
 where $m(t)$ is  the   \emph{total mass}  of the resource, namely,  the sum  at time $t$ of the stocks at different nodes \begin{equation}\label{m}
    m(t)\equiv\sum_{i\in N}X_i(t)=\left\langle \mathbf{e} ,X(t)\right\rangle
\end{equation}
while $Y(t)$ is  the vector of the \emph{shares  of the total stock} in different regions, having the property
$$\sum_{i=1}^nY_i(t)=\langle \mathbf e, Y_i(t)\rangle\equiv 1, \quad \forall t\ge0,$$
that is, $Y(t)$ varies in time remaining on the simplex of equation
$$y_1+y_2+\cdots+y_n=1,\quad y_i\ge0.$$
hence $X(t)$ satisfies the equation in  \eqref{vectsys} if and only if $m(t)$ and $Y(t)$ satisfy
\begin{equation}\label{stateqgen}
    \begin{cases}
        m'(t)=\varphi(m(t))m(t)-\langle c(t),\mathbf e\rangle, &t\ge0\\
        Y'(t)=\left(D+B^\top+\frac{\langle c(t),\mathbf e\rangle}{m(t)}\right)Y(t)-\frac{c(t)}{m(t)}&t\ge0\\
        m(0)=\langle x, \mathbf e\rangle, \ Y(0)=\frac 1 {\langle x, \mathbf e\rangle}x
    \end{cases}
\end{equation}
\subsubsection*{Evolution of the system in absence of extraction}


We now examine the relationship between the system's evolution and the network parameters.

\begin{Lemma}    \label{lem:D+B}
Assume that Hypothesis $(N1)$ is satisfied. Then the matrix \( D+B\) has 0 as  eigenvalue, moreover all other eigenvalues have strictly negative real parts. More explicitly, if $\{0, \lambda_2, \cdots, \lambda_n\}$ are the eigenvalues of $D+B$, then  \begin{equation}
    \label{eq:eigen}    0>\emph{Re}\lambda_2\ge\emph{Re}\lambda_3\ge\cdots\ge\emph{Re}\lambda_n  \end{equation}
The   eigenspace associated to the zero eigenvalue has dimension 1, and is generated by  the     ({dominant}) eigenvector $\mathbf e$. Similarly, the transpose $D+B^\top$ has a  (dominant) eigenvector $\zeta$ associated to the zero eigenvalue, having all positive coordinates. All eigenvectors associated to nonzero eigenvalues have at least a nonpositive coordinate.
\end{Lemma}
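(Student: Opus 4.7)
The plan is to view $D+B$ as a shifted irreducible nonnegative matrix and apply the Perron--Frobenius theorem. Observe first that by construction the row sums of $D+B$ vanish, so $(D+B)\mathbf{e}=0$, exhibiting $0$ as an eigenvalue with right eigenvector $\mathbf{e}$. A preliminary bound $\operatorname{Re}\lambda\le 0$ for every eigenvalue follows from Gershgorin's theorem, since the $i$-th disk is centered at $-\sum_{j\ne i}b_{ij}\le 0$ with radius $\sum_{j\ne i}b_{ij}$, and hence lies in the closed left half-plane and touches the imaginary axis only at $0$. The substantive work is to upgrade this to the strict inequality \eqref{eq:eigen}.

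For this, I would fix a scalar $s>\max_i\sum_{j\ne i}b_{ij}$ and set $M:=D+B+sI$. Then $M$ is entrywise nonnegative with \emph{strictly positive} diagonal (so its associated digraph has a self-loop at every node), and its off-diagonal pattern coincides with that of $B$. Strong connectedness (Hypothesis (N1)) makes $B$, and therefore $M$, irreducible; positivity of the diagonal makes $M$ aperiodic, so $M$ is primitive. From $M\mathbf{e}=s\mathbf{e}$ with $\mathbf{e}>0$, the Perron--Frobenius theorem for primitive matrices identifies $s$ with the spectral radius $r(M)$, shows that $s$ is a simple eigenvalue with $\mathbf{e}$ as (unique up to scalar) Perron vector, and guarantees $|\mu|<s$ for every other eigenvalue $\mu$ of $M$. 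Undoing the shift via $\lambda=\mu-s$ simultaneously gives simplicity of the eigenvalue $0$ of $D+B$, the one-dimensionality of its eigenspace (spanned by $\mathbf{e}$), and $\operatorname{Re}\lambda=\operatorname{Re}\mu-s<0$ for every nonzero eigenvalue.

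For the positive left eigenvector $\zeta$, I would apply the same argument to $M^\top=D+B^\top+sI$, which is again primitive since irreducibility and aperiodicity are preserved under transposition. Its Perron eigenvalue equals $s$ and carries a strictly positive eigenvector $\zeta$, yielding $(D+B^\top)\zeta=0$ with $\zeta>0$. The last claim, that every eigenvector associated to a nonzero eigenvalue has at least one nonpositive coordinate, follows from the uniqueness clause in Perron--Frobenius: up to scalar, $\mathbf{e}$ is the only nonnegative eigenvector of the primitive matrix $M$, hence of $D+B$; consequently no eigenvector corresponding to $\lambda\ne 0$ can have all (strictly) positive coordinates.

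I do not expect a major obstacle here; the only delicate point is ensuring that the shifted matrix $M$ is \emph{primitive} and not merely irreducible, since primitivity is precisely what promotes the Gershgorin bound $\operatorname{Re}\lambda\le 0$ to the strict inequality in \eqref{eq:eigen}. This is automatic once $s$ is chosen strictly larger than the maximal outflow, so that every diagonal entry of $M$ is strictly positive, which is why I build in that strict inequality when choosing $s$.
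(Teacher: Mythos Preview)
Your proof is correct and follows essentially the same route as the paper: row sums give $(D+B)\mathbf{e}=0$, Gershgorin places all eigenvalues in the closed left half-plane, and Perron--Frobenius (applied to the irreducible Metzler matrix $D+B$, or equivalently to your shifted primitive matrix $M$) upgrades this to strict dominance of the zero eigenvalue and yields the positive left eigenvector $\zeta$. The only cosmetic difference is that the paper invokes the Metzler-matrix form of Perron--Frobenius directly, while you make the underlying shift to a nonnegative matrix explicit; your added remark that primitivity is indispensable is a slight overstatement (irreducibility alone already forces $\operatorname{Re}\mu<r(M)$ for $\mu\neq r(M)$), but it does no harm.
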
 

The reader will find the proof of this lemma in the appendix.

 \medskip

 The role of eigenvalues and eigenvectors is made immediately clear when analyzing the evolution of the system in absence of extraction. By differentiating the product in \eqref{ym},   using \eqref{eq:consmass}, and setting $c(t)\equiv 0$ in \eqref{vectsys}, one sees that  $m(t)$ and $Y(t)$ defined in \eqref{ym} \eqref{m} satisfy the equations
\begin{equation}\label{eq:mY}\begin{cases}
m ^{\prime }(t)=m (t)\varphi (m(t)), &t\ge 0\\
Y'(t)=(D+B^\top)Y(t), &t\ge 0\\
m(0)= \langle  \mathbf{e},x \rangle,\quad Y(0)=\langle \mathbf e,x\rangle^{-1}x\\
\end{cases} 
\end{equation}
and the following lemma holds.

\begin{Lemma}\label{lem:Y} 
Assume that Hypothesis $(N1)$ is satisfied.
Assume the initial stock    $x\ge0$, with $x$  nonzero, and assume  a null extraction $c(t)\equiv0$. Then  
\begin{align}\label{eq:Yformula}
    &Y(t)=\langle\mathbf e,x\rangle^{-1}e^{(D+B^\top)t}x, \quad \textrm{with} \quad Y_i(t)\ge0,\quad  \forall t\ge0, \forall i\in N, \\
    &\lim_{t\to\infty} Y(t)=\langle\mathbf{e} , \zeta\rangle^{-1} \zeta, \label{eq:Yformula2}
\end{align} where $\zeta$ is the dominant eigenvector defined in Lemma \ref{lem:D+B}. Moreover, $m(t)>0$ for all $t\ge0$ and:
\begin{itemize}
    \item[$(i)$] in assumptions $(S1)$, \ \ $m(t)=m_0(t):=\left(e^{-\Gamma(\sigma-1)t}\big[(\sum_ix_i)^{1-\sigma}-\frac 1 K\big]+\frac 1 K\right)^{\frac 1{1-\sigma}}$
    
    \item[$(ii)$] in assumptions $(S2)$,\ \ $m(t)=m_0(t):=\left(e^{-\delta(1-\sigma)t}\big[(\sum_ix_i)^{1-\sigma}-\frac 1 \delta\big]+\frac 1 \delta\right)^{\frac 1{1-\sigma}}$
    
    \item[$(iii)$] in assumptions  $(S3)$,\ \ 
    $\displaystyle m(t)=m_0(t):=\exp\left( e^{-\frac\Gamma K t}(\log(\langle x, \mathbf e\rangle)-K) +K\right) $.
\end{itemize}
So that, In the long run, as $t \to \infty$, we have
\[
\bar m := \lim_{t \to \infty} m(t) = 
\begin{cases}
K^{\frac{1}{\sigma-1}}, & \text{under assumptions}\  (S1) ,\\[2pt]
\delta^{\frac{1}{\sigma-1}}, & \text{under assumptions} \ (S2),\\[2pt]
e^{K}, & \text{under assumptions}\ (S3).
\end{cases}
\]
\end{Lemma}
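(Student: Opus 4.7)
My plan is to exploit the decoupling of the dynamics of $m(t)$ and $Y(t)$ provided by \eqref{eq:mY} when $c\equiv 0$. Since $x\ge 0$ and $x\neq 0$, one has $m(0)=\langle\mathbf e,x\rangle>0$ and $Y(0)=\langle\mathbf e,x\rangle^{-1}x$ lies on the unit simplex. The share equation is linear with constant coefficient matrix $D+B^\top$, so its unique solution is $Y(t)=e^{(D+B^\top)t}Y(0)$, which gives \eqref{eq:Yformula} upon multiplying by $\langle\mathbf e,x\rangle^{-1}$.

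For nonnegativity of $Y(t)$, I would use the Metzler structure of $D+B^\top$: its off-diagonal entries are the $b_{ji}\ge 0$ from Hypothesis (N1), while $D$ only affects the diagonal. Choosing $\mu$ large enough so that $M:=D+B^\top+\mu I$ has all entries nonnegative, one writes $e^{(D+B^\top)t}=e^{-\mu t}e^{Mt}$ and expands $e^{Mt}$ as a series of nonnegative matrices; thus $e^{(D+B^\top)t}$ preserves the nonnegative orthant, and $Y(t)\ge 0$ since $Y(0)\ge 0$. As a by-product, the identity $\mathbf e^\top(D+B^\top)=((D+B)\mathbf e)^\top=0$ (which is precisely why $\mathbf e$ is the dominant \emph{right} eigenvector of $D+B$) yields $\mathbf e^\top Y(t)=\mathbf e^\top Y(0)=1$, so $Y(t)$ remains on the simplex for all $t$.

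The limit \eqref{eq:Yformula2} will follow from the spectral structure of Lemma~\ref{lem:D+B}. Since $0$ is a simple eigenvalue of $D+B^\top$ with right eigenvector $\zeta>0$ and left eigenvector $\mathbf e$, while all other eigenvalues have strictly negative real part, the spectral projector onto $\ker(D+B^\top)$ along the direct sum of the remaining generalized eigenspaces is $P=\zeta\mathbf e^\top/\langle\mathbf e,\zeta\rangle$. Hence $e^{(D+B^\top)t}\to P$ exponentially fast as $t\to\infty$, at a rate governed by $|\mathrm{Re}\,\lambda_2|>0$, and therefore $Y(t)\to\langle\mathbf e,x\rangle^{-1}Px=\zeta/\langle\mathbf e,\zeta\rangle$.

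For the total mass, $m'=m\,\varphi(m)$ is a scalar Bernoulli-type ODE, and I would solve it in closed form in each case. Under (S1) and (S2), substituting $u=m^{1-\sigma}$ gives $u'=(1-\sigma)\varphi(m)\,u$ with $m^{\sigma-1}u=1$, which reduces the equation to an affine linear ODE in $u$ ($u'=-\Gamma(\sigma-1)u+\Gamma(\sigma-1)/K$ for (S1), and $u'=-\delta(1-\sigma)u+(1-\sigma)$ for (S2)); integrating and inverting yields formulas $(i)$ and $(ii)$. Under (S3), the substitution $v=\log m$ produces the linear ODE $v'=\Gamma(1-v/K)$, whose solution exponentiates to $(iii)$. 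Positivity of $m(t)$ is then manifest, since $m^{1-\sigma}(t)$ (respectively $\log m(t)$) stays bounded between its initial value and the positive steady state. The long-run limits $\bar m$ are read off by letting $t\to\infty$ in each explicit expression, all exponential factors vanishing by the sign of their exponents. The only mildly delicate step in the whole argument is the identification of the spectral projector $P$ for the limit of $Y(t)$; everything else is elementary ODE manipulation.
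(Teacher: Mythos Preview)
Your proposal is correct and follows essentially the same route as the paper: the linear semigroup formula for $Y$, positivity from the Metzler property of $D+B^\top$, the spectral projector $P=\zeta\mathbf e^\top/\langle\mathbf e,\zeta\rangle$ for the long-run limit, and the substitutions $u=m^{1-\sigma}$ (cases (S1)--(S2)) and $v=\log m$ (case (S3)) to linearize the mass equation. Your write-up is in fact slightly more explicit than the paper's in a couple of places (the shift trick for Metzler positivity, and the simplex invariance via $\mathbf e^\top(D+B^\top)=0$); the only minor slip is the phrase ``upon multiplying by $\langle\mathbf e,x\rangle^{-1}$'', which is redundant since $Y(0)$ already carries that factor.
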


The reader will find the proof of this lemma in the appendix, while the proof of the Corollary below is straightforward.
 
\begin{Corollary} \label{cor:X} In the assumptions of Lemma \ref{lem:Y}, and for $\bar m= \lim_{t\to\infty}m(t)$,     the trajectory of   system \eqref{vectsys}, with null extraction $c(t)\equiv0$, satisfies
$$ \bar x=\lim_{t\to\infty}X(t)=\bar m \langle \mathbf e, \zeta\rangle^{-1} \zeta$$
that is, it
converges to a point of the simplex of equation
$$x_1+x_2+\cdots+x_n=\bar m,\quad x_i\ge0.$$
\end{Corollary}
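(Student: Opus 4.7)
The plan is very short because the statement follows almost immediately from the decomposition $X(t)=m(t)Y(t)$ of \eqref{ym} together with the two limits already established in Lemma \ref{lem:Y}.

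First I would invoke the factorization $X(t)=m(t)Y(t)$ and the fact that, under null extraction, $m(t)$ and $Y(t)$ decouple and satisfy \eqref{eq:mY}. By Lemma \ref{lem:Y}, we know that $m(t)\to\bar m$ (with $\bar m$ given case-by-case by $(i)$–$(iii)$), and that $Y(t)\to\langle\mathbf e,\zeta\rangle^{-1}\zeta$. Since the product $\mathbb R\times\mathbb R^n\to\mathbb R^n$ is continuous, passing to the limit yields
\[
\bar x=\lim_{t\to\infty}X(t)=\lim_{t\to\infty}m(t)\,\lim_{t\to\infty}Y(t)=\bar m\,\langle\mathbf e,\zeta\rangle^{-1}\zeta.
\]

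Next I would verify that $\bar x$ lies on the stated simplex. Taking inner product with $\mathbf e$ gives
\[
\langle\mathbf e,\bar x\rangle=\bar m\,\langle\mathbf e,\zeta\rangle^{-1}\langle\mathbf e,\zeta\rangle=\bar m,
\]
which is the equation of the simplex. Nonnegativity of the coordinates of $\bar x$ follows from Lemma \ref{lem:D+B}, which guarantees $\zeta>0$ (all coordinates strictly positive), and from $\bar m>0$ in each of the three cases $(S1)$–$(S3)$.

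There is no real obstacle here: the content of the corollary is entirely contained in Lemma \ref{lem:Y} and Lemma \ref{lem:D+B}, and the argument is just the continuity of multiplication combined with the positivity of the Perron–Frobenius eigenvector $\zeta$.
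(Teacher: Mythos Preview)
Your proposal is correct and is exactly the ``straightforward'' argument the paper alludes to (the paper does not spell out a proof of this corollary, only remarks that it is immediate from Lemma~\ref{lem:Y}). The decomposition $X(t)=m(t)Y(t)$, the two limits from Lemma~\ref{lem:Y}, and the positivity of $\zeta$ from Lemma~\ref{lem:D+B} are precisely what is needed.
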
 
 \begin{center}
    \includegraphics[width=8cm]{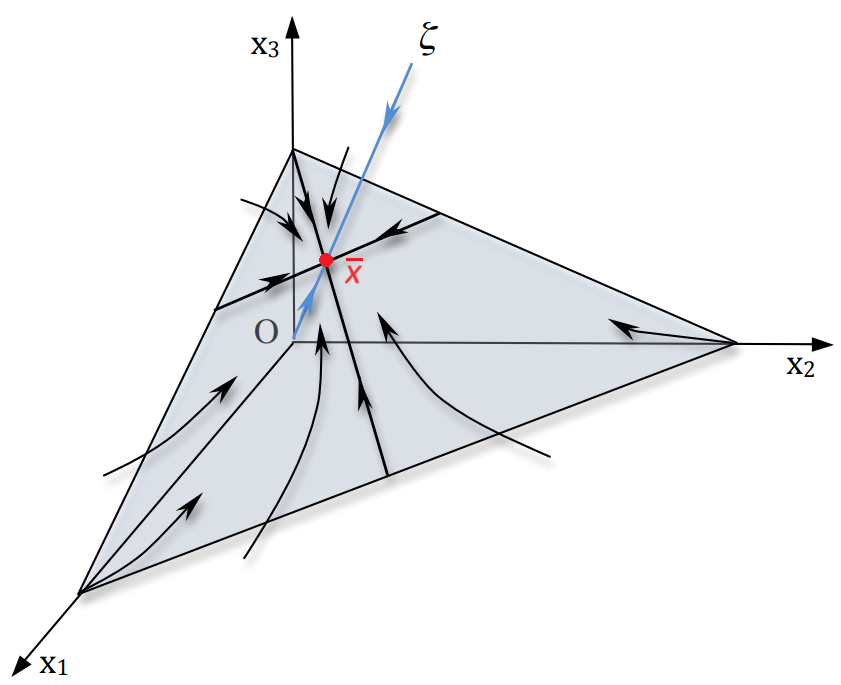} 
 \end{center}

Note that in the case of linear increments of the stock treated in \cite{FFF24}, the trajectory was either diverging or converging to zero along the direction of the dominant eigenvector. In the nonlinear cases treated here, the trajectory converges to a vector with all finite components, is amplified by the mass \( m(t) \), and it is distributed in the long run proportionally to the coordinates of the dominant eigenvector \(\zeta\).

That establishes also the relationship between the trajectory of the system and the network:  in the absence of extraction, the stock distributes proportionally to the node's eigenvector centrality of the matrix $D+B$.  Namely, it's the linear part of the system, embodied by $Y(t)$,  that dictates the proportions between the components of the limiting stock.

\section{The Social Planner Problem}\label{sec:SC}

We study a centralized benchmark in which a benevolent decision maker chooses the entire extraction vector $c(t)$ to maximize the discounted sum of agents' instantaneous utilities:
\[
\max_{c(\cdot)\ge 0}\; \int_0^{\infty} e^{-\rho t}\sum_{i\in F} u(c_i(t))\,dt \quad\text{subject to \eqref{vectsys}--\eqref{constr}.}
\]
In other words, we consider the problem of an utilitarian (Benthamite) social planner. It also coincides with the cooperative outcome under transferable utility and full commitment (in the \emph{grand coalition} case with uniform weights).

We address the problem by means of Bellman's Dynamic Programming. To this extent, if  $L_{loc}^{1}(0,+\infty ;\mathbb{R}^{n}_+)$ denotes the   set of non-negative vector valued and locally integrable functions on 
$[0,+\infty )$, and $X(t)=X(t;c,x)$ denotes the trajectory starting at $x$ and driven by a control $c$, we  set 
\begin{equation}\label{eq:admSC}
\mathbb A:=\{c\in L_{loc}^{1}(0,+\infty ;\mathbb{R}^{n}_+): c(t)\ge 0, X(t;c,x)\ge0, \forall t\ge0 \}    
\end{equation}
as the set of admissible strategies for the social planner. We will call any control $c\in\mathbb A$ and the associated trajectory an \textit{admissible couple}. 
Then the \emph{value function} for the social planner is defined as
$$
V(x)=\sup _{c \in \mathbb A\color{black}} J\left(c ; x\right).
$$
The Hamilton-Jacobi-Bellman (briefly, HJB) equation associated to the problem by Dynamic Programming is
\begin{equation} \label{hjbgen}
\rho v(x)=\max _{c \geq 0}\left\{\sum_{i=1}^f\left(u\left(c_i\right) - c_i \frac{\partial v}{\partial x_i}(x)\right)\right\}+ \langle   \nabla v(x), \alpha(x)+  \varphi(\langle x,\mathbf e\rangle)x\rangle
 \end{equation}  paired with the condition giving the maximizing control
\begin{equation}  \label{optcontgen}
c^*_i=\left(\frac{\partial v}{\partial x_i}(x)\right)^{-\frac1\sigma}, \forall i\in F, \quad c_i^*\equiv 0, \forall i\in N\setminus F.
\end{equation}
with $\sigma=1$ in the case of a logarithmic utility.  The reader will find in Section \ref{ssec:verif} in the Appendix how \eqref{hjbgen} can be explicitated in the different assumptions on the  data.

 Now consider the following set of parameters and quantities. 

 \begin{small} 
\begin{table} [h]
\begin{center}
\caption{ }

\label{TabOC}
\begin{tabular}{c|c|c|c|c|c  }
framework  &
$\varphi(m)$&
$u(c)$ &
$\theta^*$&
$A$&
$B$  
 \\
\hline\hline
 (S1)(U1)&
$\Gamma \left( 1-\frac{1}{K}m^{\sigma-1}\right) $&
$-\frac{c^{1-\sigma}}{\sigma-1}$, $\sigma>1$&
$\frac{\rho+\Gamma(\sigma-1)}{\sigma f}$&
${(\theta^*)^{-\sigma}} $&
$-\frac{\Gamma (\theta^*)^{-\sigma} }{K\rho}$ \\
\hline
 (S2)(U1) & 
$m^{\sigma -1}-\delta$&
$\frac{c^{1-\sigma}}{1-\sigma}$, $\sigma<1$ &
$\frac{\rho+\delta(1-\sigma)}{ \sigma f} $&
$ {(\theta^*)^{-\sigma}}$&
$\frac {(\theta^*)^{-\sigma}}{\rho}$ \\
\hline
 (S3)(U2)&
$\Gamma \left( 1-\frac{1}{K}\log m\right) $&
$\log c$ &
$\frac1f(\rho + \frac \Gamma K )$ &
$(\theta^*)^{-1}$&
$ \frac {\Gamma -f\theta^*+f\theta^*\log\theta^*}{\theta^*\rho }  $ 
 \end{tabular}
\end{center}
\end{table}
\end{small}

\begin{Theorem}[Existence and Uniqueness of the Optimal Strategy]\label{thm:OC}
In each of the assumption frameworks described in Table \ref{TabOC}, and assuming $\theta^*>0$, the following facts hold:
\begin{itemize}
    \item[$(i)$] when admissible,   the only  closed-loop  optimal  strategy $\psi^*$   is given by
\begin{equation}\label{eq:optccl}
 \psi_{i}^*=0, \ \  \forall i\not\in F \quad\textrm{and}  \quad \psi_{i }^*(x)=\theta^*\langle  x, \mathbf e\rangle,\ \forall i\in F;
\end{equation} 
 \item[$(ii)$]  in such case, the value function of the problem  is, 
\begin{equation}\label{eq:vfOC}
V(x)=A\;u(\langle\mathbf e,x\rangle)+B.
\end{equation} 
 \item[$(iii)$]  moreover, the value function $V(x)$ is the unique classical solution of the HJB equation \eqref{hjbgen}, in the sense of Theorems \ref{thm:ver1} and \ref{thm:ver2} in Appendix \ref{thm:OC}.
\end{itemize}
\end{Theorem}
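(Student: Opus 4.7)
The strategy is classical Bellman verification. The aggregate structure of the problem---$g_i(x)=x_i\varphi(m)$ with $m=\langle x,\mathbf e\rangle$, combined with the mass-conservation identity \eqref{eq:consmass}---strongly suggests that the value function depends on $x$ only through the scalar $m$. I would therefore start from the ansatz
$$v(x)=A\,u(\langle x,\mathbf e\rangle)+B,\qquad \psi^*_i(x)=\theta^*\langle x,\mathbf e\rangle\ (i\in F),\ \ \psi^*_i(x)=0\ (i\notin F),$$
determine the constants $A,B,\theta^*$ by substitution into HJB~\eqref{hjbgen}, and finally identify $v$ with the value function using the verification Theorems~\ref{thm:ver1}--\ref{thm:ver2} of the appendix. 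The ``when admissible'' qualifier in the statement can be read as the standing hypothesis that the closed-loop trajectory under $\psi^*$ remains in the nonnegative orthant, an assumption under which all the computations below are justified.

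For the coefficient calculation, since $\nabla v(x)=A\,u'(m)\,\mathbf e$, mass-conservation yields $\langle\nabla v,\alpha(x)\rangle=A\,u'(m)\langle\mathbf e,(D+B^\top)x\rangle=0$, while $\langle\nabla v,\varphi(m)x\rangle=A\,u'(m)\,m\,\varphi(m)$. The pointwise maximizer in \eqref{optcontgen}, namely $c_i^*=(A\,u'(m))^{-1/\sigma}$ (or $(A\,u'(m))^{-1}$ in the log case), is automatically symmetric across $i\in F$ and linear in $m$, so writing $c_i^*=\theta^* m$ reduces HJB \eqref{hjbgen} to the scalar identity
$$\rho\bigl[A\,u(m)+B\bigr]=f\bigl[u(\theta^* m)-\theta^* m\cdot A\,u'(m)\bigr]+A\,u'(m)\,m\,\varphi(m).$$
In each framework the right-hand side splits cleanly into a term proportional to $u(m)$ and a constant; matching the $u(m)$-coefficient gives a single linear equation for $\theta^*$ which, in (S1)(U1), reduces to $\sigma f\theta^*=\rho+\Gamma(\sigma-1)$ and, analogously in (S2)(U1) and (S3)(U2), to the other two entries of Table~\ref{TabOC}. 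The implied value $A=(\theta^*)^{-\sigma}$ (resp.\ $(\theta^*)^{-1}$ in (S3)) and the constant $B$ then come out exactly as listed. The standing hypothesis $\theta^*>0$ ensures the feedback is interior and that $A$ and $B$ are finite.

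For the verification step, I would first substitute $\psi^*$ in \eqref{vectsys}: the aggregate mass obeys the scalar ODE $m'(t)=m(t)\bigl[\varphi(m(t))-f\theta^*\bigr]$, a Bernoulli (in (S1)--(S2)) or Gompertz (in (S3)) equation solvable in closed form, admitting a unique global positive solution with a finite, nondegenerate steady state whenever $\theta^*>0$. Substituting back yields $\int_0^{\infty}e^{-\rho t}\sum_{i\in F}u(\theta^* m(t))\,dt=A\,u(m_0)+B$, so the candidate payoff is attained under $\psi^*$. Theorems~\ref{thm:ver1}--\ref{thm:ver2} then give $V=v$, the optimality and uniqueness of $\psi^*$, and hence the uniqueness of $V$ as a classical HJB solution in the admissible class, \emph{provided} one verifies the transversality condition $\limsup_{t\to\infty}e^{-\rho t}v(X(t;c,x))=0$ along every admissible $c\in\mathbb A$, not only along $\psi^*$.

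This uniform transversality estimate is, in my view, the main technical obstacle and the place where the parameter restriction $\theta^*>0$ is truly used. In (S1)(U1) and (S3)(U2), the saturation of $\varphi$ implies that every admissible trajectory satisfies $m'(t)\le m(t)\varphi(m(t))$ with a bounded right-hand side, so Lemma~\ref{lem:Y} provides an a priori upper envelope $m(t)\le\max\{m_0,\bar m\}$ independent of $c$; combined with the sign condition $\rho+\Gamma(\sigma-1)>0$ (resp.\ $\rho+\Gamma/K>0$) encoded in $\theta^*>0$, this is enough to dominate $e^{-\rho t}v(X(t))$ uniformly. Framework (S2)(U1) is more delicate because $\varphi$ blows up near zero and $u$ is unbounded; here one must combine the concavity of $u$ with the explicit envelope from Lemma~\ref{lem:Y}(ii) to produce an integrable bound on $u(c_i(t))$. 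Once transversality is secured, the verification theorem closes (i), (ii), and (iii) simultaneously.
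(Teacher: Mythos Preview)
Your overall architecture matches the paper's exactly: guess $v(x)=A\,u(\langle\mathbf e,x\rangle)+B$, compute $A,B,\theta^*$ by substitution (this is the paper's Proposition~\ref{prop:w}), then appeal to the verification Theorems~\ref{thm:ver1}--\ref{thm:ver2}. The coefficient computation you sketch is correct.

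However, you have the transversality difficulty reversed. The easy case is (S2)(U1), not (S1)(U1) or (S3)(U2). In (S2)(U1) one has $\sigma<1$, so $u(m)=m^{1-\sigma}/(1-\sigma)$ extends continuously to $m=0$; combined with the uniform upper envelope $m(t)\le\max\{m_0,\bar m\}$ (this is exactly the bound \eqref{eq:growth_X}--\eqref{eq:bound-for-m} in the paper), $v(X(t))$ is \emph{bounded} along any admissible trajectory and $e^{-\rho t}v(X(t))\to 0$ trivially. This is the content of Theorem~\ref{thm:ver1}. The blow-up of $\varphi$ near zero that worried you plays no role here.

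The genuinely delicate cases are (S1)(U1) with $\sigma>1$ and (S3)(U2): there $u(m)\to-\infty$ as $m\downarrow 0$, so an admissible control that drains the aggregate mass can send $v(X(t))\to-\infty$. Your upper envelope on $m$ gives only an \emph{upper} bound on $v$, which is useless for proving $\limsup_{t\to\infty}e^{-\rho t}v(X(t))\ge 0$. What is actually needed is to show that, for any control with $J(x,c)>-\infty$, the quantity $e^{-\rho t}m(t)^{1-\sigma}$ (resp.\ $e^{-\rho t}\ln m(t)$) cannot diverge. The paper obtains this via a Jensen-type estimate: chop time into unit intervals, use concavity of $u$ to bound $J(x,c)$ from above by a series whose general term involves $e^{-\rho T}m(T)^{1-\sigma}$, and conclude from $J(x,c)>-\infty$ that the general term vanishes. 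This argument is the substance of Theorem~\ref{thm:ver2}, and it is precisely the step your sketch does not supply.
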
 
    
The proof of Theorem \ref{thm:OC} constitutes the contents of  Appendix \ref{ssec:verif}. Note that Theorems \ref{thm:ver1} and \ref{thm:ver2} and their proofs provide sufficient conditions for the value function to be the unique solution to the HJB equation (\ref{hjbgen}) under different set of assumptions.
It is also to be noted that such results rely on the proof of existence and uniqueness of the solution of the \emph{closed-loop equation}, as worked out in the next Section \ref{ssec:cle}.

\begin{Remark}
\label{rem:nonlinearalpha}
Let $\tilde\alpha:\mathbb{R}_+^n\!\to\!\mathbb{R}^n$ be locally Lipschitz and mass-preserving, i.e.
$\langle \mathbf e,\tilde\alpha(x)\rangle=0$ for all $x$ (and inward-pointing on $\partial\mathbb{R}_+^n$).
Replacing the linear migration $\alpha(x)=(D+B^\top)x$ by $\tilde\alpha(x)$ leaves $V(x)=A\,u(\langle \mathbf e,x\rangle)+B$ a good candidate  solution of the HJB equation in 
Theorem \ref{thm:OC}. Indeed in this case
$\nabla V(x)=A\,u'(\langle \mathbf e,x\rangle)\mathbf e$ so
$\langle \nabla V(x),\tilde\alpha(x)\rangle=0$ and the migration term drops out, hence the maximizer of the Hamiltonian coincides with $c^*$ in Theorem \ref{thm:OC}, and $V$
satisfies the same scalar identity in the aggregate $m=\langle \mathbf e,x\rangle$ as in the linear case. Conditions to characterize the admissibility of the candidate optimal trajectory and, later,  the candidate Markovian Nash equilibrium are harder, and the dynamics of the ``direction" $Y$ of the trajectory  can be no longer described  by properties of suitable matrices.
\end{Remark}

\subsection{The closed-loop equation}\label{ssec:cle} As we will see in the next sections, both  the optimal control for the cooperating agents and the equilibrium strategy for the competing agents in the game described in the forthcoming Section \ref{sec:game}  share the same structure 
\begin{equation}\label{eq:cthetagen}
   c^*_i(t)=\psi_i(X(t))=\theta \langle X(t),\mathbf e\rangle=\theta EX(t)
\end{equation} where $E$ is the $n\times n$ matrix having  the $i$-th row  equal to $\mathbf e$ when $i\in F, $ and equal to the zero vector when $i\not \in F, $ namely
\begin{equation}\label{matrixE}
    E=(e_{ij}),\quad \textrm{with}\ \ e_{ij}=1, \forall i\in F,\ \  \textrm{and}\ e_{ij}=0, \forall i\not\in F, \ \ \forall j\in N. 
\end{equation}
In particular, for the case of cooperating agents, one chooses $\theta=\theta^*$ to have the optimal feedback control. What we say next will hold for all $\theta$ unless otherwise specified.

When we insert a closed-loop control of type  \eqref{eq:cthetagen} into the state equation \eqref{vectsys}, we derive a closed-loop equation of type
\begin{equation}\label{eq:cleoc}\begin{cases}     
      X'(t)=\varphi(\langle \mathbf e, X(t)\rangle )X(t)+(D+B^\top-\theta E)X(t) &t\ge0\\ X(0)=x.
  \end{cases}
  \end{equation} 
  and of which we need to discuss existence and uniqueness. To do so, we note that \eqref{eq:cleoc} is equivalent 
 to the coupled systems for the evolution of the total mass and the stock shares, namely 
\begin{equation}\label{eq:sism^*}
    \begin{cases}        m'(t)=\big(\varphi(m(t))-f\theta\big)m(t), &t\ge0\\
        Y'(t)=(D+B^\top-\theta E+f\theta I)Y(t), &t\ge0\\
         (m(0), Y(0))=(\langle\mathbf e, x\rangle, \frac{x}{ \langle\mathbf e, x\rangle}). 
    \end{cases}
\end{equation}
Clearly, the (linear) equation for $Y(t)$ has a unique solution, and the same holds for the equation for $m(t)$. This implies the following fact.

\begin{Corollary}
    In the assumptions of Theorem \ref{thm:OC}, and when  $\theta=\theta^*$, the closed-loop equation \eqref{eq:cleoc} has a unique solution $X^*(t)$.
\end{Corollary}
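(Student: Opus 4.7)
The plan is to exploit the equivalence between the closed-loop equation \eqref{eq:cleoc} and the decoupled system \eqref{eq:sism^*}, produced by the change of variables $X(t)=m(t)Y(t)$ with $m(t)=\langle\mathbf e,X(t)\rangle$ and $Y(t)=X(t)/m(t)$. On the open positive cone $\{x\in\mathbb R^n_+ : \langle\mathbf e,x\rangle>0\}$ this transformation is a smooth bijection onto $(0,\infty)\times\Delta$, where $\Delta$ is the open unit simplex; hence existence and uniqueness for \eqref{eq:cleoc} are equivalent to those for \eqref{eq:sism^*} together with the invariance property $m(t)>0$ for all $t\ge 0$.

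The first step I would take is to dispose of the $Y$-equation, which is linear autonomous with constant-coefficient matrix $M:=D+B^\top-\theta^*E+f\theta^*I$: it admits the unique global solution $Y(t)=e^{Mt}Y(0)$. I would then turn to the scalar equation
\[
m'(t)=\bigl(\varphi(m(t))-f\theta^*\bigr)m(t),\qquad m(0)=\langle\mathbf e,x\rangle>0,
\]
whose right-hand side is smooth on $(0,\infty)$, so that Picard--Lindel\"of yields local existence and uniqueness from any positive initial datum. In each of the three frameworks (S1)--(S3), the equation is structurally identical to the $m$-equation in \eqref{eq:mY} up to a shift of the parameters ($\Gamma\mapsto\Gamma-f\theta^*$ in (S1), (S3), and $\delta\mapsto\delta+f\theta^*$ in (S2)); consequently one can carry over the explicit integration performed in Lemma \ref{lem:Y}(i)--(iii) and obtain a closed-form $m(t)$ defined on $[0,\infty)$, which is the unique solution.

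Finally, I would set $X^*(t):=m(t)Y(t)$ and verify \eqref{eq:cleoc} by direct differentiation, using the identities $\langle\mathbf e,(D+B^\top)x\rangle=0$ (mass conservation) and $Ex=m\sum_{i\in F}e_i$, so that $\langle\mathbf e,EX^*\rangle=fm$. Uniqueness for \eqref{eq:cleoc} then follows because any other solution $\widetilde X$ gives rise, via the same change of variables, to a pair $(\widetilde m,\widetilde Y)$ solving \eqref{eq:sism^*} with identical initial data, hence forced to coincide with $(m,Y)$. The genuine obstacle in this scheme is the global positivity and non-explosion of $m(t)$, since the transformation $(m,Y)\mapsto X$ is degenerate at $m=0$: in (S1) and (S3) one uses that $m\equiv 0$ is an equilibrium of the scalar equation (so inaccessible in finite time by uniqueness), while in (S2) one uses the repelling boundary behaviour $\varphi(m)\to+\infty$ as $m\to 0^+$, combined with the explicit formula, to keep $m(t)$ bounded away from zero on every compact time interval.
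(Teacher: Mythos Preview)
Your approach is correct and is essentially the same as the paper's: the paper simply observes that \eqref{eq:cleoc} is equivalent to the decoupled system \eqref{eq:sism^*}, that the linear equation for $Y$ has a unique solution and that the same holds for the scalar equation for $m$, and concludes. You have expanded considerably on this one-line argument (bijection of the change of variables, Picard--Lindel\"of, global positivity of $m$, reconstruction of $X^*$), but the underlying strategy is identical.
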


\subsection{Total mass dynamics}

\begin{Proposition}[Evolution of the Total Mass] \label{lem:m^*}Assume  Hypothesis $(N1)$, $x\ge0$ with $x\neq0$.   A sol\-ution $m(t)$ of \eqref{eq:sism^*} is:
    \begin{itemize}
    \item[$(i)$] in assumptions $(S1)(U1)$, for $  \Gamma>f\theta$, 
     $$m(t)=\left[e^{-(\Gamma-f\theta)(\sigma-1)t}\big(\langle\mathbf e, x\rangle^{1-\sigma}-\frac 1 K\frac\Gamma{\Gamma-f\theta}\big)+\frac 1 K\frac\Gamma{\Gamma-f\theta}\right]^{\frac 1{1-\sigma}}$$
    
    \item[$(ii)$] in assumptions $(S2)(U1)$, for $f(1-\sigma)<1$, 
    $$m(t)=\left[e^{-(\delta+f\theta)(1-\sigma)t}\big(\langle\mathbf e, x\rangle^{1-\sigma}-\frac 1 {\delta+f\theta}\big)+\frac 1 {\delta+f\theta}\right]^{\frac 1{1-\sigma}}$$
    
    \item[$(iii)$] in assumptions  $(S3)(U2)$, 
          $$m(t)=\exp\left[ e^{-\frac\Gamma K t}\left(\log\langle  \mathbf e, x\rangle-K+\frac{f\theta}\Gamma\right) +K-\frac{f\theta}\Gamma\right] $$
\end{itemize}  Moreover,   $m(t)>0$ at all times $t\ge0$ and converges to a finite positive number $\hat m$. 
    \end{Proposition}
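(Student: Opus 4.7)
The plan is to reduce the vector closed-loop dynamics to a scalar autonomous ODE for the total mass $m(t)$, and then integrate that ODE in closed form in each of the three growth families by a suitable change of variable.

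\textbf{Step 1: Derivation of the scalar ODE.} Starting from the closed-loop equation \eqref{eq:cleoc} with feedback $\theta EX(t)$, I take the inner product with $\mathbf{e}$. By Hypothesis (N1), the migration term vanishes, since \eqref{eq:consmass} gives $\langle \mathbf{e},(D+B^{\top})X\rangle=0$. The structure \eqref{matrixE} of $E$ yields $\langle \mathbf{e},EX\rangle=f\langle \mathbf{e},X\rangle=fm$, because exactly $f$ rows of $E$ equal $\mathbf{e}^{\top}$. Hence
\[
m'(t)=\langle \mathbf e,X'(t)\rangle=\varphi(m(t))m(t)-f\theta\,m(t),\qquad m(0)=\langle\mathbf e,x\rangle>0,
\]
which is precisely the first line of \eqref{eq:sism^*}.

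\textbf{Step 2: Explicit integration in each framework.} The ODE $m'=(\varphi(m)-f\theta)m$ is linearized by a case-specific substitution.
Under (S1) and (S2), the right-hand side is of Bernoulli type, so I set $u(t)=m(t)^{1-\sigma}$, which yields $u'=(1-\sigma)(\varphi(m)-f\theta)u$ expressed as a linear ODE with constant coefficients. In case (S1) this reads $u'=(1-\sigma)[(\Gamma-f\theta)u-\Gamma/K]$, whose stationary value $u_\infty=\Gamma/[K(\Gamma-f\theta)]$ requires the assumption $\Gamma>f\theta$ to be positive; integrating and inverting $m=u^{1/(1-\sigma)}$ gives (i). In case (S2) one gets $u'=(1-\sigma)[1-(\delta+f\theta)u]$ with $u_\infty=1/(\delta+f\theta)$, and the analogous inversion produces (ii). Under (S3), setting $v(t)=\log m(t)$ turns the equation into the linear ODE $v'+\frac{\Gamma}{K}v=\Gamma-f\theta$, whose elementary integration with $m=e^{v}$ yields (iii).

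\textbf{Step 3: Positivity and long-run limit.} In all three cases the explicit expressions show $m(t)>0$ for every $t\ge 0$: the bracketed quantity being raised to a power (respectively exponentiated) is a convex combination of the positive initial value $\langle\mathbf e,x\rangle^{1-\sigma}$ (or $\log\langle\mathbf e,x\rangle$) and a positive stationary value, hence stays strictly positive for all $t$. Letting $t\to\infty$, the exponential factors vanish and one reads off the long-run limits
\[
\hat m=\bigl[K^{-1}\Gamma/(\Gamma-f\theta)\bigr]^{1/(1-\sigma)}\ \text{in (S1)},\quad \hat m=(\delta+f\theta)^{1/(\sigma-1)}\ \text{in (S2)},\quad \hat m=e^{K-f\theta/\Gamma}\ \text{in (S3)},
\]
all of which are finite and strictly positive.

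The only nontrivial point is Step 1, where one must recognize the algebraic cancellations that decouple the mass dynamics from the direction $Y(t)$; once this is observed, the remaining work is the routine integration of three elementary ODEs, and positivity follows from monotone interpolation between positive endpoints. The assumption $\Gamma>f\theta$ in (S1) and $f(1-\sigma)<1$ in (S2) --- the latter being used elsewhere to guarantee admissibility and finiteness of the value function --- play no role at the level of the scalar dynamics beyond ensuring that the displayed stationary values are positive.
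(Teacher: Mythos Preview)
Your proposal is correct and follows essentially the same route as the paper: the paper simply declares the result a ``straightforward adaptation'' of Lemma~\ref{lem:Y}, whose proof uses exactly the substitutions $\mu=m^{1-\sigma}$ in cases (S1)--(S2) and $\mu=\log m$ in case (S3) to reduce to a linear ODE, which is precisely your Step~2. Your Step~1 is a re-derivation of the first line of \eqref{eq:sism^*}, already stated in the text, and your Step~3 spells out the positivity and limit claims that the paper leaves implicit; the only minor slip is calling $\log\langle\mathbf e,x\rangle$ a ``positive initial value'' in (S3), but this is harmless since $m(t)=e^{v(t)}>0$ regardless of the sign of $v$.
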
 
The proof of this fact is a straightforward adaptation of the proof of Lemma \ref{lem:Y}. 

Taking limits in the explicit formulas of Proposition \ref{lem:m^*} and plugging the planner’s coefficient $\theta^*$ from Table \ref{TabOC} yields the closed-form steady states values for $m$:
\[
\begin{array}{l}
\text{(S1)(U1):}\quad 
m^{*}=\Big[\tfrac{K}{\sigma}\Big(1-\tfrac{\rho}{\Gamma}\Big)\Big]^{\frac{1}{\sigma-1}},\\
\text{(S2)(U1):}\quad 
m^{*}=\Big[\tfrac{\rho+\delta}{\sigma}\Big]^{\frac{1}{\sigma-1}},\\
\text{(S3)(U2):}\quad 
m^{*}=\exp\!\Big(K-\tfrac{\rho}{\Gamma}-\tfrac{1}{K}\Big).
\end{array}
\]
In particular, in the planner benchmark, both $m^{*}$ and the optimal aggregate extraction $f\theta^*$ (being $\theta^*$ proportional to $1/f$, see Table \ref{TabOC}) are independent of the network and of the number of active sites $f$. 

\begin{Remark}
From the previous formulas we have that:
\[
\frac{\partial m^*}{\partial \rho}<0,\qquad
\frac{\partial m^*}{\partial \Gamma}>0\ \text{(where it appears)},\qquad
\frac{\partial m^*}{\partial K}>0\ \text{(where it appears)}
\]
Indeed, as one can aspect, a higher discount rate $\rho$ lowers the present value of future shifting extraction upward and the steady state downward while a larger intrinsic growth $\Gamma$ or carrying capacity $K$ raises the regeneration curve, allowing a higher sustainable stock.
\end{Remark}

\subsection{Stock Shares Dynamic}
Clearly,  the components of the limit value of the trajectory, when existing, may bear different signs although  their sum $m$ is positive. We then proceed to study the stock shares vector $Y(t)$. To this extent, the following lemma is fundamental.

\begin{Lemma} \label{lem:eigenBB} Let $E$ be the matrix defined in \eqref{matrixE}, and  $\theta\in\mathbb R$. Then: 
		\begin{itemize}
			\item[$(i)$] the matrix $(D+B - \theta  E^\top+f \theta I)$ has    eigenvector $\mathbf e$  associated with  the null eigenvalue; hence, there exists a real  eigenvector $\zeta_\theta$ of $(D+B - \theta  E^\top+f\theta I)^\top$ associated with  the null eigenvalue. 
			\item[$(ii)$] Consider a basis $\{ \zeta, v_2,\dots,v_n\}$ of generalized eigenvectors  of $D+B^T$ , associated with the eigenvalues $\{0,\lambda_2,\dots,\lambda_n\}$ described in Lemma \ref{lem:D+B}. Then $\{ \zeta_\theta, v_2,\dots,v_n\}$ is a basis of generalized eigenvectors for $(D+B -\theta  E^\top+f\theta I)$  associated with eigenvalues $\{0,\lambda_2+f\theta,\dots,\lambda_n+f \theta\}$. 
		\end{itemize}		
	\end{Lemma}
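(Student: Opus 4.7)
For part~(i), I will verify by direct computation that $\mathbf{e}$ lies in the kernel of $M := D+B-\theta E^\top+f\theta I$. Hypothesis~(N1) is designed precisely so that the row sums of $D+B$ vanish, yielding $(D+B)\mathbf{e}=0$. The matrix $E$ has the rank-one form $E=\mathbf{1}_F\mathbf{e}^\top$, where $\mathbf{1}_F$ is the indicator vector of $F$, so $E^\top\mathbf{e}=f\,\mathbf{e}$; the two $f\theta$-terms therefore cancel. Existence of a real null eigenvector $\zeta_\theta$ of $M^\top$ is then immediate, since $M$ and $M^\top$ share their spectrum and the null space of a real matrix always contains a nonzero real vector.

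For part~(ii), the key algebraic identity is $Ev=\langle\mathbf{e},v\rangle\,\mathbf{1}_F$, which shows that $E$ annihilates the hyperplane $H:=\{v:\langle\mathbf{e},v\rangle=0\}$. The crucial structural fact I need to establish is that every generalized eigenvector $v_i$ of $D+B^\top$ associated with a \emph{nonzero} eigenvalue $\lambda_i$ lies in $H$. This is a standard biorthogonality statement: because $\mathbf{e}^\top$ is a left eigenvector of $D+B^\top$ for $0$ (equivalently $(D+B)\mathbf{e}=0$), it annihilates every right generalized eigenspace of $D+B^\top$ corresponding to a nonzero eigenvalue. I can derive this either from the primary decomposition theorem or by a short induction on the length of the Jordan chain, using $\mathbf{e}^\top(D+B^\top)=0^\top$ at each step.

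Once biorthogonality is in place, the rest is bookkeeping. Writing the operator of interest in its transposed form $(D+B^\top)+\theta(fI-E)$ (which is $M^\top$, whose generalized-eigenvector structure is in natural bijection with that of $M$), I restrict to $H$: the term $-\theta E$ drops out and the operator reduces to $(D+B^\top)+f\theta I$. Hence each $v_i$ with $i\ge 2$ is a generalized eigenvector for the shifted eigenvalue $\lambda_i+f\theta$, and the $n-1$ such vectors remain linearly independent. Adjoining $\zeta_\theta$ then produces the claimed basis by a dimension count together with the standard direct-sum decomposition into generalized eigenspaces, provided $\lambda_i+f\theta\neq 0$ for all $i\ge 2$---a mild condition guaranteed for natural sign choices of $f\theta$ by the spectral bounds $\mathrm{Re}\,\lambda_i<0$ of Lemma~\ref{lem:D+B}. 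I expect the biorthogonality step to be the only moment requiring genuine care; everything else is routine linear algebra.
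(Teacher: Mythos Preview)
Your approach is essentially the same as the paper's: both exploit the rank-one structure $E=\mathbf 1_F\,\mathbf e^\top$, establish the biorthogonality $\langle\mathbf e,v_i\rangle=0$ for generalized eigenvectors of $D+B^\top$ attached to nonzero eigenvalues (using that $\mathbf e$ is a left eigenvector for $0$), and conclude $Ev_i=0$ so that on the hyperplane $H$ the perturbation reduces to the shift $+f\theta I$. Your write-up is in fact a bit more careful than the paper's on two points: you correctly note that the natural home for the basis $\{\zeta_\theta,v_2,\dots,v_n\}$ is the \emph{transposed} operator $D+B^\top-\theta E+f\theta I$ (the paper's statement and proof have some transposition slips here), and you flag the non-degeneracy condition $\lambda_i+f\theta\neq 0$ needed for the dimension count, which the paper leaves implicit.
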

The proof of this fact can be found in  Appendix \ref{subapp:otherproofs}.

Note that, in comparison to the system with null extraction, the equation for $Y(t)$ is modified by a linear term $(-\theta E+f\theta )Y(t)$, with the difference that now the matrix $D+B^\top-\theta E+f\theta$ is not necessarily Metzler, and the associated properties may fail to be satisfied, meaning the optimal trajectory may fail to lie entirely in the positive orthant or may cease to converge towards the direction of the (possibly) dominant eigenvector $\zeta_\theta$.


\bigskip

Lemma \ref{lem:eigenBB}  implies that, with respect to the shares $Y(t)$ described  in Lemma \ref{lem:Y}, the extraction process modifies the direction of only the dominant eigenvector $\zeta$ of the matrix $D+B^\top$, which changes from $\zeta$ to $\zeta^*\equiv\zeta_{\theta^*}$, and the only eigenvalues $\lambda_i$ for $i\ge2$, which are increased by $f\theta^*$. Notably, the remaining eigenvectors remain the same.

Note also that $\textrm{Re}\lambda_i+f\theta^*$ may be become positive, so that 0 ceases to be the eigenvalue of maximal real part; in addition,  the matrix $D+B -\theta  E^\top+f\theta^* I$ may cease to be Metzler, so that without further assumptions we cannot establish whether $\zeta^*$ remains a positive (dominant) eigenvector. It emerges that both of these issues do not ensue when players are sufficiently ``patient", meaning $\theta$ is small enough. 
To this extent, we set 
$$\theta_1\equiv \frac{\vert\lambda_2\vert}{f} $$
and note that $\theta<\theta_1$ if and only if $\lambda_2+\gre{f}\theta$ is smaller than the dominant eigenvalue 0. Moreover, since   $\zeta_\theta$ depends continuously on $\theta$, and $\zeta>0$, then  there exists $\theta_2>0$ such that $ \zeta_\theta$ is   positive for all $\theta<\theta_2$. We then set
 \begin{equation*}
 \theta_2=\sup\{r:\zeta_\theta>0, \forall   \theta\in[0,r]\}.
\end{equation*} 


This discussion therefore leads us to the first result concerning the admissibility of the candidate-optimal control identified in feedback form in Theorem \ref{thm:OC}:

\begin{Proposition}
\label{pr:lrs} 
Assume to be in the hypotheses of Theorem \ref{thm:OC} and suppose that $0<\theta<\min\{\theta_1, \theta_2\}$. Let $X, Y$ be the state variables described in \eqref{eq:sism^*}. Then:
\begin{itemize}
    \item[$(i)$] 
 $\zeta_\theta>0$ and 
\[
\lim_{t\to\infty}Y(t)=\frac{\zeta_\theta}{\langle e,\zeta_\theta\rangle},\qquad
\lim_{t\to\infty}X(t)=\hat m\,\frac{\zeta_\theta}{\langle e,\zeta_\theta\rangle},
\]
where $\hat m=\lim_{t\to\infty} m(t)$ is given in Proposition \ref{lem:m^*}.

\item[$(ii)$] There exists a constant $L$ such that, for every $x$ in the cone
\[
\mathcal{C} := \left \{ x\in \mathbb{R}^n_+ \; : \; \left \vert \frac{x}{\left \langle x, \mathbf e\right\rangle} - \frac{\zeta_\theta}{\left \langle \zeta_\theta, \mathbf e\right\rangle}  \right \vert < L \right \},
\]
the candidate-optimal trajectory is admissible and therefore optimal.
\end{itemize}
\end{Proposition}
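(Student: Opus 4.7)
My plan is to reduce everything to the spectral structure of the matrix $M_\theta:=D+B^\top-\theta E+f\theta I$ governing the share dynamics in \eqref{eq:sism^*}, and then to combine the resulting exponential contraction with the verification theorem underlying Theorem \ref{thm:OC}(iii) to conclude admissibility and optimality on the cone $\mathcal{C}$.

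For part (i), I would first observe that $\zeta_\theta>0$ follows at once from the definition of $\theta_2$ and the hypothesis $\theta<\theta_2$. By Lemma \ref{lem:eigenBB}, the spectrum of $M_\theta$ is $\{0,\lambda_2+f\theta,\ldots,\lambda_n+f\theta\}$, and the strict inequality $\theta<\theta_1=|\lambda_2|/f$ forces $\mathrm{Re}(\lambda_i+f\theta)<0$ for $i\ge 2$. A short calculation shows that $\mathbf e$ is a \emph{left} eigenvector of $M_\theta$ at the eigenvalue $0$, using $(D+B)\mathbf e=0$ (Lemma \ref{lem:D+B}) and $E^\top\mathbf e=f\mathbf e$; hence $\langle\mathbf e,Y(t)\rangle$ is conserved along \eqref{eq:sism^*} and equals $1$. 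Setting $Y_\infty:=\zeta_\theta/\langle\mathbf e,\zeta_\theta\rangle$ (so $\langle\mathbf e,Y_\infty\rangle=1$ as well), the spectral projector onto $\mathrm{span}(\zeta_\theta)$ along the remaining generalized eigenspaces reads $P_0 v=\langle\mathbf e,v\rangle\zeta_\theta/\langle\mathbf e,\zeta_\theta\rangle$, and the identity $\langle\mathbf e,Y(0)-Y_\infty\rangle=0$ forces $P_0(Y(0)-Y_\infty)=0$. Thus the perturbation $Y(0)-Y_\infty$ lies in the stable invariant subspace $V^{s}$ on which $e^{M_\theta t}$ decays exponentially, and
\[
Y(t)-Y_\infty \;=\; e^{M_\theta t}\bigl(Y(0)-Y_\infty\bigr)\;\longrightarrow\;0.
\]
Multiplying by $m(t)\to\hat m$ from Proposition \ref{lem:m^*} delivers the two limits in the statement.

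For part (ii), I would quantify the contraction: let $C\ge 1$ and $\eta>0$ be such that $\|e^{M_\theta t}|_{V^{s}}\|\le C e^{-\eta t}$ for all $t\ge 0$, and set $L:=\min_{i\in N}(Y_\infty)_i/C$, which is strictly positive since $\zeta_\theta>0$. For any $x\in\mathcal{C}$, the normalized initial share $Y(0)=x/\langle x,\mathbf e\rangle$ satisfies $|Y(0)-Y_\infty|<L$, and exponential contraction on $V^{s}$ yields $|Y(t)-Y_\infty|\le CL=\min_{i\in N}(Y_\infty)_i$ for all $t\ge 0$, hence $Y_i(t)\ge 0$ componentwise. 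Since $m(t)>0$ by Proposition \ref{lem:m^*}, this gives $X(t)=m(t)Y(t)\ge 0$, and the feedback $\psi^*(X(t))=\theta E X(t)$ is then also nonnegative, so the candidate couple is admissible in the sense of \eqref{eq:admSC}. Optimality of $\psi^*$ on $\mathcal{C}$ then follows from Theorem \ref{thm:OC}(iii): the function $V$ in \eqref{eq:vfOC} is a classical solution of \eqref{hjbgen}, $\psi^*$ maximizes the Hamiltonian pointwise, and admissibility of the associated trajectory closes the verification argument (Theorems \ref{thm:ver1}--\ref{thm:ver2}).

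The hard part will be the clean passage in (i) from the conservation law $\langle\mathbf e,Y(t)\rangle\equiv 1$ to the statement that the $\zeta_\theta$-component of the initial perturbation vanishes. This hinges on identifying $\mathbf e$ as the \emph{left} eigenvector of $M_\theta$ at $0$, so that the corresponding spectral projector is governed by $\langle\mathbf e,\cdot\rangle$; without this observation one would be left with a residual drift in the $\zeta_\theta$-direction, or, in the degenerate case, only polynomial bounds from generalized eigenvectors. A secondary subtlety in (ii) is to fix a norm on $V^{s}$ so that $C$ is intrinsic to $M_\theta$ and independent of the initial datum $x$, which is what allows $L$ to serve as a uniform radius for the cone.
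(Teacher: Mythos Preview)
Your proposal is correct and follows essentially the same spectral-projection route as the paper: identify $\mathbf e$ as the left eigenvector of $M_\theta$ at $0$, use $\theta<\theta_1$ to push the remaining spectrum into the open left half-plane, deduce exponential contraction of $Y(t)-Y_\infty$ on the stable subspace, and then choose the cone radius $L$ so that this contraction keeps $Y(t)$ in the nonnegative orthant for all $t$. If anything, your version is more explicit than the paper's, which simply invokes $\|e^{Mt}-P\|\le Ce^{-\eta t}$ and ``takes a small closed ball'' without writing down a formula for $L$; your choice $L=\min_i(Y_\infty)_i/C$ and the observation that $Y(0)-Y_\infty\in V^s$ (via the conservation law) make the uniform-in-$t$ positivity transparent.
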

The proof of this fact can be found in  Appendix \ref{subapp:otherproofs}.

The result of Proposition \ref{pr:lrs} ensures that, for a certain subset of initial data, the candidate-optimal trajectory is admissible. One may, on the other hand, ask whether there exist conditions under which the candidate-optimal trajectory is admissible starting from any initial data in $\mathbb{R}_n^+$. This question will be related to the conditions used in Section \ref{sec:game} to ensure that the game is well defined and sub-game perfect.

The idea is that the points where one actually needs to check that the system remains positive are those on the boundary of $\mathbb{R}_n^+$. If, at such points, the direction of the trajectory predicted by the candidate-optimal control is inward, then there is no possibility of leaving the positive orthant. We only need to check the nodes with extraction, since for the others, when the stock is zero, there is no outflow of the resource. Hence, for every $i\in F$, it is required that $Y'_i$, as predicted by the second equation in (\ref{eq:sism^*}), be positive starting from any $Y$ such that $Y_i=0$ (and the other components non-negative, with at least one positive). Since the second equation in (\ref{eq:sism^*}) is linear, this condition is equivalent to requiring that 
\[
\left\langle (D+B^\top-\theta E+f\theta I)e_j, e_i \right\rangle >0 
\]
for every $j\neq i$. In other words, every element of the matrix $(D+B^\top-\theta E+f\theta I)$ must be positive for all $i\in F$ and $j\neq i$, that is, 
\[
(0<)\theta^* < \min_{i \in F, j\neq i} b_{ij} 
\]
(so this implies, in particular, that $\min_{i \in F, j\neq i} b_{ij} >0$).

We can summarize this discussion in the following proposition.

\begin{Proposition}
\label{pr:admissibility2}
Suppose that the conditions of Theorem \ref{thm:OC} are satisfied and that 
\[
\theta^* < \min_{i \in F, j\neq i} b_{ij}. 
\]
Then the candidate-optimal trajectory is admissible for any initial data in $\mathbb{R}^n_+$ and is optimal.
\end{Proposition}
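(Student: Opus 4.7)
The plan is to show that, under the stated bound on $\theta^*$, the nonnegative orthant $\mathbb{R}_+^n$ is forward-invariant under the closed-loop equation \eqref{eq:cleoc} with $\theta=\theta^*$. Once this is done, the candidate trajectory is admissible for every $x\in\mathbb{R}_+^n$, and optimality follows from Theorem \ref{thm:OC}. Nonnegativity of $c^*(t)=\theta^* m^*(t)$ at each active node is already automatic, because Proposition \ref{lem:m^*} delivers $m^*(t)>0$ for all $t\ge 0$; so everything reduces to state positivity.

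First, I would carry out the boundary sign analysis already sketched in the text immediately before the statement. Writing the closed-loop vector field as $F(X)=\varphi(\langle\mathbf e,X\rangle)X+(D+B^\top-\theta^* E)X$, the nonlinear term $\varphi(m)X_i$ vanishes whenever $X_i=0$, so on the face $\{X_i=0\}\cap\mathbb{R}_+^n$ it suffices to sign $[(D+B^\top-\theta^* E)X]_i$. For a reserve node $i\notin F$, the $i$-th row of $D+B^\top-\theta^* E$ coincides with that of $D+B^\top$, which is Metzler, and the diagonal contribution dies at $X_i=0$, so $F_i(X)=\sum_{j\ne i}b_{ji}X_j\ge 0$ with no further assumption. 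For an active node $i\in F$, the extraction term contributes $-\theta^*(EX)_i=-\theta^*\sum_{j\ne i}X_j$ when $X_i=0$, and a direct computation yields
\[
F_i(X)=\sum_{j\ne i}(b_{ji}-\theta^*)\,X_j,
\]
which is nonnegative exactly under the hypothesis $\theta^*<\min_{i\in F,\,j\ne i} b_{ij}$ (with the transpose convention understood, so that each relevant inflow coefficient exceeds $\theta^*$).

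Second, I would invoke Nagumo's invariance theorem: the closed-loop vector field $F$ is locally Lipschitz on $\mathbb{R}_+^n$, and the outward normal $-e_i$ on each face $\{X_i=0\}$ satisfies $\langle -e_i,F(X)\rangle\le 0$ by the preceding computation, hence $\mathbb{R}_+^n$ is positively invariant. An equivalent elementary route is a contradiction at the first exit time $\tau:=\inf\{t\ge 0:X^*_i(t)<0 \text{ for some } i\in N\}$: continuity forces $X^*_i(\tau)=0$ for the offending index, while the boundary sign estimate forces $\dot X^*_i(\tau)\ge 0$, contradicting the definition of $\tau$. This gives admissibility on $[0,\infty)$ for every $x\in\mathbb{R}_+^n$, and optimality of the candidate trajectory then follows directly from Theorem \ref{thm:OC}.

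The main obstacle is purely the boundary sign check: the nonlinearity $\varphi(m)$ causes no trouble because it multiplies $X_i$ and thus vanishes on each face, and the invariance step is completely standard once the inward-pointing inequality is in hand. One minor subtlety is that the vector field is only weakly (not strictly) inward-pointing at the boundary, so a strict-Lyapunov argument is unavailable; this is precisely the regime that Nagumo's subtangential condition — or the exit-time contradiction — is designed to handle.
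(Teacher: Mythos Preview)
Your argument is correct and is essentially the same approach as the paper's: both verify the inward-pointing (Nagumo) condition on each face $\{X_i=0\}$ by checking that the relevant off-diagonal entries $b_{ji}-\theta^*$ are positive for $i\in F$, with the nonlinear term $\varphi(m)X_i$ playing no role on the boundary. The only cosmetic difference is that the paper carries out the sign check on the linear $Y$-equation via the matrix $D+B^\top-\theta^* E+f\theta^* I$ (the $+f\theta^* I$ affects only diagonal entries, so the condition is identical), whereas you work directly with $X$; this changes nothing of substance.
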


\section{Strategic interaction}
\label{sec:game}

In the investigation of the Nash equilibria of the game, we restrict our attention to stationary Markovian equilibria. Specifically, the extraction rates $c_i$ of the agents are modeled as reaction maps to the observed stock level $X(t)$ at time $t$:
$$c(t) = \psi(X(t)),\ \ \text{with}\ c_i(t) = \psi_i(X(t)),\ \forall i \in N$$
(where $\psi_i \equiv 0$ for all $i \in N \setminus F$). Consequently, the system evolves according to the \emph{closed-loop equation} (CLE):
\begin{equation}\label{eq:cle}
    \begin{cases}
        \dot{X}(t) = \varphi \left( \langle X(t),\mathbf e\rangle\right)X(t)+(D + B^\top) X(t) - \psi(X(t)), & t > 0 \\ 
        X(0) = x, & 
    \end{cases}
\end{equation}
provided that such an equation admits a unique solution.
More precisely,
$$\psi = (\psi_1, \psi_2, \dots, \psi_n), \ \ \psi_i: \mathbb{R}_+^n \to [0, +\infty).$$
We denote the set of admissible strategy profiles by
$$\mathbb{A} = \mathbb{A}_1 \times \mathbb{A}_2 \times \cdots \times \mathbb{A}_n,$$
where $\mathbb{A}_i$ represents the collection of all (re)actions $\psi_i$ of Player $i$ (or a null reaction at nodes with reserves).
We denote the solution of equation (\ref{eq:cle}) by $X(t;x, \psi)$. We also follow the convention of denoting by $\psi_{-i}$ all components of $\psi$ other than the $i$-th, so that $\psi = (\psi_i, \psi_{-i})$.

\begin{Definition}\label{MNE}\emph{(Markovian Perfect Equilibrium)} 
We define a strategy profile $\psi \in \mathbb{A}$ as a \emph{Markovian Perfect Equilibrium (MPE)} if, for all $x_0 \in \mathbb{R}_+^n$ and $i \in F$, the control $c_i(t) = \psi_i(X_i^{\psi, x_0}(t))$ is optimal for Player $i$’s problem, 
where $c_j(t) = \psi_j(X(t))$ for every $j \neq i$, the nonnegative state and extraction rate constraints \eqref{constr}, and the discounted total payoff $J_i(c_i)$ given by \eqref{utility}, which is to be maximized over the set of admissible controls $\mathbb{A}_i$.
\end{Definition}

We focus on network with the following connections characteristics:  all nodes in $F$ have positive inward edges from all other nodes: $g_{ij}>0$,  $\forall i \in F$, $j\neq i$.

We consider the following set of strategies  $\mathbb A_i$ for player $i$,   $i\in F$, given by 
\begin{equation}\label{def:Abuffoi}
\mathbb A_i := \left \{ \psi_i \colon \mathbb{R}_+^n \to [0, +\infty) \; : \; \begin{array}{l} 
(i)\, \psi_i \text{ is Lipschitz-continuous }\\
(ii)\, \psi_i(x) \leq \left \langle ( D+B^\top)x, e_i \right\rangle\\ 
\text{ \qquad for all $x \in \mathbb{R}_+^n$ such that $x_i=0$.}
\end{array} 
\right \}
\end{equation}
When $i\not\in F$, we assume $\mathbb A_i$ contains only the null strategy. Note that the Lipschitz-continuity of $\psi_i$ implies that the CLE (\ref{eq:cle}) has a unique solution, $X(t)$, whereas the condition $(ii)$ ensures that, when the stock at node $i$ is null, the extraction $\psi_i$ can be, at most, as much as the inflow at  $i$ from the other nodes, so that the stock remains nonnegative at all times. For this reason a Markovian Nash equilibrium $\psi$ within $\mathbb{A}$ is inherently subgame perfect because if a player deviates (whether intentionally or by mistake) from $\psi$, they cannot exit the set $\mathbb{R}_+^n$, and the strategy profile $\psi$ remains valid as a Nash equilibrium from any state reached within $\mathbb{R}_+^n$.

\subsection{Existence of Symmetric Markovian equilibria}
In Theorem \ref{th:ME}, we provide an existence result for a Markovian equilibrium (i.e. a Nash equilibrium in closed-loop form) for the game where all players extract the same resource quantity and gain an equal utility.
 Explicit formulas will also be provided. The general statement of the theorem will refer to any of the frameworks described in Table \ref{Tabellone} below. 

 \begin{small} 
\begin{table} [h]
\begin{center}
\caption{ }

\label{Tabellone}
\begin{tabular}{c|c|c|c|c|c  }
framework  &
$\varphi(m)$&
$u(c)$ &
$\hat \theta$&
$A$&
$B$  
 \\
\hline\hline
 (S1)(U1)&
$\Gamma \left( 1-\frac{1}{K}m^{\sigma-1}\right) $&
$-\frac{c^{1-\sigma}}{\sigma-1}$, $\sigma>1$&
$\frac{\rho+\Gamma(\sigma-1)}{1+f(\sigma-1)}$&
${\hat \theta^{-\sigma}} $&
$-\frac{\Gamma \hat \theta^{-\sigma} }{K\rho}$ \\
\hline
  (S2)(U1) & 
$m^{\sigma -1}-\delta$&
$\frac{c^{1-\sigma}}{1-\sigma}$, $\sigma<1$ &
$\frac{\rho+\delta(1-\sigma)}{1-f(1-\sigma)} $&
$ {\hat \theta^{-\sigma}}$&
$\frac {\hat \theta^{-\sigma}}{\rho}$ \\
\hline
 (S3)(U2)&
$\Gamma \left( 1-\frac{1}{K}\log m\right) $&
$\log c$ &
$\rho + \frac \Gamma K $ &
$\hat \theta^{-1}$&
$ \frac {\Gamma K-f\hat \theta+\hat \theta\log\hat \theta}{\hat \theta\rho }  $ 
 \end{tabular}
\end{center}
\end{table}
\end{small}

\begin{Theorem}[Existence of Markovian Equilibria]\label{th:ME}
In each of the assumption frameworks described in Table \ref{Tabellone}, and assuming\footnote{Note that in order for $\hat \theta$ to be strictly positive, in framework  (S2)(U1) it must be assumed that  $f<\frac1{1-\sigma}$. Conversely,   in  frameworks (S1)(U1) and (S3)(U2) $\hat \theta$ is positive without further assumptions.} 
\[
0 < \hat \theta < \min_{i\in F, \, i\neq j} b_{ij},
\]
(being $b_{ij}$ the components of $B$) the following facts hold:
\begin{itemize}
\item[$(i)$] the set of strategies $\psi^*$   given by
\begin{equation}\label{eq:optccl}
 \psi_{i}^*=0, \ \  \forall i\not\in F \quad\textrm{and}  \quad  \psi_{i }^*(t)=\hat \theta\langle \mathbf e, X(t)\rangle,\ \forall i\in F;
\end{equation}
is admissible (i.e. it belongs to $\mathbb{A}$)
    \item[$(ii)$] the set of strategies $\psi^*$ described above is
a Markovian Perfect Equilibrium
 \item[$(iii)$]   the utility of each player at the equilibrium is the same
\begin{equation}\label{eq:vf}
V^i(x)=
 V(x):=A\;u(\langle\mathbf e,x\rangle)+B
\end{equation}

\end{itemize}
    \end{Theorem}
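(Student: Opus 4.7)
The plan is to adapt the dynamic-programming verification scheme of Theorem~\ref{thm:OC} to the decentralized game, proving the three claims in order.

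For part (i), the candidate $\psi_i^*(x)=\hat\theta\langle\mathbf{e},x\rangle$ is linear and therefore globally Lipschitz, which takes care of the first requirement in \eqref{def:Abuffoi}. For the boundary inequality, fix $i\in F$ and $x\in\mathbb{R}^n_+$ with $x_i=0$: then $\langle(D+B^\top)x,e_i\rangle$ reduces to the pure inflow $\sum_{k\neq i}b_{ki}x_k$, while $\psi_i^*(x)=\hat\theta\sum_{k\neq i}x_k$, so the standing hypothesis $\hat\theta<\min_{i\in F,\,j\neq i}b_{ij}$ delivers the inequality term by term, just as in the positivity check of Proposition~\ref{pr:admissibility2}. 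A byproduct is that any unilateral deviation $c_i\in\mathbb{A}_i$ produces a well-defined closed-loop trajectory staying in $\mathbb{R}^n_+$.

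For parts (ii)–(iii), I would fix $i\in F$, freeze the opponents at $\psi^*_{-i}$, and treat player $i$'s one-sided control problem
$$X'(t)=\varphi(m(t))X(t)+(D+B^\top)X(t)-\hat\theta\,m(t)\!\!\sum_{j\in F\setminus\{i\}}\!\! e_j-c_i(t)e_i,\qquad m(t)=\langle\mathbf{e},X(t)\rangle,$$
with payoff \eqref{utility}. The associated HJB equation for player $i$'s value function $v$ reads
$$\rho v(x)=\sup_{c_i\ge 0}\bigl\{u(c_i)-c_i\,\partial_{x_i}v(x)\bigr\}+\langle\nabla v(x),\alpha(x)+\varphi(m)x\rangle-\hat\theta m\sum_{j\in F\setminus\{i\}}\partial_{x_j}v(x).$$
I would then try the ansatz $v(x)=A\,u(m)+B$. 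Because $\nabla v=A u'(m)\mathbf{e}$, the migration term vanishes by mass conservation \eqref{eq:consmass} and every partial derivative of $v$ collapses to $A u'(m)$. The HJB thus reduces to a scalar identity in $m$, whose first-order condition gives $c_i^*=(A u'(m))^{-1/\sigma}=A^{-1/\sigma}m$ in the CRRA case (and the analogous $c_i^*=(A u'(m))^{-1}=A^{-1}m$ in the log case). Imposing the equilibrium requirement $c_i^*=\hat\theta m$ determines $A$, and matching the coefficients of $u(m)$ and the constant term in the reduced HJB pins down $\hat\theta$ and $B$ exactly as in Table~\ref{Tabellone}.

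The remaining step is the verification proper: for each $i\in F$ and each admissible deviation $c_i$, I would show, along the lines of the theorems in Appendix~\ref{ssec:verif}, that the map $t\mapsto e^{-\rho t}v(X^{c_i}(t))+\int_0^t e^{-\rho s}u(c_i(s))\,ds$ is nonincreasing, is constant when $c_i=\psi_i^*$, and satisfies the transversality condition $e^{-\rho t}v(X^{c_i}(t))\to 0$. Well-posedness in $\mathbb{R}^n_+$ is delivered by part (i), the HJB identity yields monotonicity, and strict concavity of $u$ identifies $\psi_i^*$ as the unique maximizer; the common scalar form of $v$ then produces the symmetric equilibrium utility \eqref{eq:vf}, giving (iii). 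The hard part will be the transversality condition, because a deviating player can drive $c_i$ to zero and push $m(t)$ toward the nontrivial steady state $\bar m$ rather than to the origin; I would control this using the a priori bounds on $m(t)$ coming from the concave aggregate ODE $m'=\varphi(m)m-\langle c,\mathbf e\rangle$ of Proposition~\ref{lem:m^*} together with the fact that the other $f-1$ active players keep extracting $\hat\theta m(t)$, with particular care in framework (S1) where $A$ and $B$ carry opposite signs and $u$ is negative.
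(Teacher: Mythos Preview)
Your proposal is correct and follows essentially the same route as the paper: the admissibility check in part (i) is verbatim the paper's argument, and for parts (ii)--(iii) both you and the paper freeze $\psi^*_{-i}$, write player $i$'s HJB, plug in the ansatz $v(x)=A\,u(\langle\mathbf e,x\rangle)+B$, exploit $\langle\nabla v,(D+B^\top)x\rangle=0$, and match the coefficients of $u(m)$ and the constant term case by case to recover $\hat\theta$, $A$, $B$ from Table~\ref{Tabellone}. The paper's verification step is exactly the one you outline---it simply invokes Theorems~\ref{thm:ver1} and \ref{thm:ver2} without spelling out the transversality argument, whereas you correctly anticipate that the extra $-(f-1)\hat\theta m$ drift from the frozen opponents only tightens the a priori bounds on $m(t)$ needed for (V1)--(V2).
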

    The proof of the theorem can be found in Appendix \ref{subapp:otherproofs}.

\bigskip

We can compare extraction in the planner’s case and in the decentralized game, and obtain a classic “tragedy of the commons” result. Indeed, if we denote by $\theta^*$ the per–site feedback under the planner (Table \ref{TabOC}) and by $\hat\theta$ the per-site coefficient in the symmetric Markov equilibrium (Table \ref{Tabellone}), the difference in aggregate extraction is
\[
\Delta_f\;:=\;f\hat\theta-f\theta^* .
\]
Under the interiority requirements of each framework, one obtains the closed forms:
\[
\begin{array}{ll}
\text{(S1)(U1)}  &\quad 
\Delta_f=\dfrac{(\rho+\Gamma(\sigma-1))(f-1)}{\sigma\,[1+f(\sigma-1)]},\\[2.2ex]
\text{(S2)(U1)}  
&\quad 
\Delta_f=\dfrac{(\rho+\delta(1-\sigma))(f-1)}{\sigma\,[1-f(1-\sigma)]},\\[2.2ex]
\text{(S3)(U2)}  
&\quad 
\Delta_f=(f-1)\!\left(\rho+\frac{\Gamma}{K}\right),
\end{array}
\]
which is positive in all three cases.

Similarly, we denoted by $m^*$ the long-run stock of the resource in the planner’s case, and we can denote by $\hat m$ the long-run stock in the decentralized game. We have in the steady state $\varphi(m)=f\hat\theta$ so, using the corresponding expressions for $\varphi$ and $\hat\theta$, we obtain:
\[
\begin{aligned}
\text{(S1)(U1)}\ &\ 
\hat m=\Big[\tfrac{K}{1+f(\sigma-1)}\Big(1-\tfrac{f\rho}{\Gamma}\Big)\Big]^{\frac{1}{\sigma-1}};\\[1ex]
\text{(S2)(U1)}\ &\ 
\hat m=\Big[\tfrac{\delta+f\rho}{\,1-f(1-\sigma)\,}\Big]^{\frac{1}{\sigma-1}};\\[1ex]
\text{(S3)(U2)}\ &\ 
\hat m=\exp\!\Big(K-\tfrac{f\rho}{\Gamma}-\tfrac{f}{K}\Big).
\end{aligned}
\]
It is then easy to verify that, in all three cases, $\hat m<m^*$ for $f>1$ (and they coincide for $f=1$).

\section{Conclusions}
\label{sec:conclusions}

We have provided an explicit, network-based theory of renewable resource extraction with mass-conserving migration and nonlinear (concave) growth. By pairing three canonical growth laws (logistic, power, log-type) with related CRRA/log utilities, we derived closed-form value functions and feedback rules for a planner and constructed a symmetric Markov equilibrium on strongly connected graphs. Perron–Frobenius geometry continues to govern long-run spatial allocations through value transport, while nonlinearity disciplines aggregate dynamics and ties global mass to its spatial distribution. These formulas yield transparent comparative statics, on network centrality (dominant eigenvector and spectral gap), growth curvature, discounting, and the set of active nodes, and quantify the efficiency wedge between decentralized and coordinated extraction. Linear benchmarks emerge as limiting cases.

{\small 
			\bibliographystyle{apalike}
			\bibliography{biblio2}
		}

\newpage

\appendix

\begin{small}
    
\section{Technical Proofs}
		\label{app:proofs}
        \subsection{Proofs for Section \ref{sec:model}}
 \begin{proof}[\textbf{Proof of Lemma \ref{lem:D+B}}]
The matrix \(D+B \) has (at least) one zero eigenvalue. In fact, the sum of the elements along the rows is 0, which implies that the columns are linearly dependent: their sum is indeed the zero vector. Consequently, the vector $\mathbf e$ is a (positive)  eigenvector associated to the eigenvalue 0. Note also that the Gershgorin Circle Theorem (Theorem 6.1.1 in \citealp{HornJohnson2013}) implies that all other eigenvalues $\lambda_i$ satisfy $\textrm{Re}\lambda_i\le0$. Now since the network is strongly connected and the matrix $D+B$ is Metzler, the Perron-Frobenius theorem implies $D+B$ has a dominant real eigenvalue \( \lambda \) associated with a dominant eigenvector $w$, such that $\lambda> \textrm{Re}\lambda_i$ for all other eigenvalues $\lambda_i$, and with $w$  the only eigenvector whose coordinates can be chosen all strictly positive. Then necessarily   \eqref{eq:eigen} holds with $\lambda=0$,  and $w$ is proportional to $\mathbf e$.  Since the transposed matrix has the same eigenvalues then there exists a dominant eigenvector $\zeta$ for \(D+B^T\) for the dominant eigenvalue $0$.

\end{proof}

\begin{proof}[\textbf{Proof of Lemma \ref{lem:Y}}]
The proof of the explicit formula for $Y(t)$ in \eqref{eq:Yformula} is a direct application of variation of constants formula. The positivity of the $Y_i$'s is a consequence of the matrix $D+B^\top$ being Metzler (see for example \cite{FR2000}).

By Lemma \ref{lem:D+B}, $D+B^\top$ and $(D+B^\top)^\top$ share the spectrum $\{0,\lambda_2,\dots,\lambda_n\}$ with $\mathrm{Re}\,\lambda_i<0$ for $i\ge 2$. Let $\zeta\gg 0$ be the right eigenvector of $D+B^\top$ for eigenvalue $0$, and note that $\mathbf e$ is a left eigenvector of $D+B^\top$ for eigenvalue $0$. The spectral projection onto $\ker (D+B^\top)$ is
\[
P=\frac{\zeta\,\mathbf e^\top}{\langle \mathbf e,\zeta\rangle}.
\]
Since $0$ is simple and all other eigenvalues have strictly negative real part, the Jordan decomposition (see, e.g., \citealp{HornJohnson2013}) yields
\[
e^{(D+B^\top)t}=P+R(t),\qquad \|R(t)\|\le C e^{-\eta t}
\]
for some $C,\eta>0$. Therefore,
\[
e^{(D+B^\top)t}x \;\longrightarrow\; P x
=\frac{\langle \mathbf e,x\rangle}{\langle \mathbf e,\zeta\rangle}\,\zeta,
\quad\text{and thus}\quad
Y(t)=\langle \mathbf e,x\rangle^{-1}e^{(D+B^\top)t}x
\;\longrightarrow\; \langle \mathbf e,\zeta\rangle^{-1}\zeta,
\]
which is \eqref{eq:Yformula2}.

    To prove the formulas for $m(t)$ we operate in the equation the change of variables: for $(i)$ and  $(ii)$, $\mu(t)=m(t)^{1-\sigma}$;  for $(iii)$,  $\mu(t)=\log(m(t))$. 
        Hence note that $\mu(t)$ satisfies a linear equation that can be solved by variation of constants formula, thus   the thesis follows.
\end{proof}


  \subsection{Proof of Theorem \ref{thm:OC}. Verification and Uniqueness for the HJB equation} \label{ssec:verif}

For the proof of Theorem \ref{thm:OC}, we make use of several preliminary results, some of which bear independent mathematical interest, and that we recollect in this section.

We start by making the HJB equation \eqref{hjbgen} explicit in the different assumption frameworks:

\begin{itemize}
    \item[(a)] if
    $\sigma\neq1$, and (U1)  (S1) hold, then  
    $$\rho v(x)= \frac{\sigma}{1-\sigma}\sum_{i\in F} \left( \frac{\partial v}{\partial x_i}(x)\right)^{1-\frac1\sigma}  + \langle   \nabla v(x), \alpha(x)\rangle+ \Gamma\left(1-K ^{-1}
    \langle x,\mathbf e\rangle^{\sigma-1}\right)\langle  \nabla v(x), x\rangle$$
    \item[(b)] if $\sigma\neq1$, and   (U1)  (S2) hold,   
    $$\rho v(x)= \frac{\sigma}{1-\sigma}\sum_{i=1}^f \left( \frac{\partial v}{\partial x_i}(x)\right)^{1-\frac1\sigma}  + \langle   \nabla v(x), \alpha(x)\rangle+  \left( 
    \langle x,\mathbf e\rangle^{\sigma-1}-\delta\right)\langle  \nabla v(x), x\rangle$$
    \item[(c)] if $\sigma=1$, and (U2)(S3) hold,     
    $$\rho v(x)=-  \sum_{i=1}^f \ln\left(\frac{\partial v}{\partial x_i}\right)  -f+ \langle   \nabla v(x), \alpha(x)\rangle +  \Gamma\left(1-K ^{-1}
    \ln(\langle x,\mathbf e\rangle)\right)\langle  \nabla v(x), x\rangle$$
\end{itemize}

The next step is  guessing  a solution to the HJB equation.

\begin{Proposition}\label{prop:w} In each of the assumption frameworks in Table \ref{TabOC}
\begin{equation}\label{eq:solhjb}
 w(x)=A\;u(\langle\mathbf e,x\rangle)+B.
\end{equation}
is a classical solution in $\mathbb R^n_+-\{0\}$ of the HJB equation \eqref{hjbgen}.

\end{Proposition}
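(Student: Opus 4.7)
The plan is to verify the ansatz $w(x)=A\,u(\langle\mathbf e,x\rangle)+B$ pointwise in each of the three framework variants (a)–(c) written immediately before the proposition. Set $m:=\langle\mathbf e,x\rangle$. The decisive structural observation is that $w$ depends on $x$ only through $m$, hence
\[
\nabla w(x)=A\,u'(m)\,\mathbf e,\qquad \frac{\partial w}{\partial x_i}(x)=A\,u'(m)\ \text{for every } i.
\]
This immediately produces three simplifications that make the HJB equation collapse to a scalar identity. First, since Hypothesis (N1) gives $\langle\mathbf e,\alpha(x)\rangle=0$ by mass conservation (see \eqref{eq:consmass}), the migration term vanishes:
\[
\langle\nabla w(x),\alpha(x)\rangle \;=\; A\,u'(m)\,\langle\mathbf e,\alpha(x)\rangle\;=\;0.
\]
Second, the advection term becomes scalar, namely $\langle\nabla w(x),x\rangle=A\,u'(m)\,m$. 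Third, the candidate feedback \eqref{optcontgen} is identical across extracting nodes, $c_i^{*}=(A\,u'(m))^{-1/\sigma}$ for $i\in F$ (interpret $\sigma=1$ in (U2)), which by the definition of $A$ in Table \ref{TabOC} reduces to $c_i^{*}=\theta^{*}m$, matching \eqref{eq:optccl}.

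Next, I would substitute these expressions into each of the explicit HJB variants (a), (b), (c) listed just before the proposition. Because every term is now a function of $m$ alone, the equation reduces to an identity between functions of $m$. In frameworks (S1)(U1) and (S2)(U1), after pulling out common powers of $m$, the identity splits into a coefficient of $m^{1-\sigma}$ and a constant. Matching the $m^{1-\sigma}$ coefficient yields a linear equation in $\theta^{*}$ of the form $\rho=\sigma f\theta^{*}-\Gamma(1-\sigma)$ for (S1) and the analogous one for (S2), which is solved by exactly the $\theta^{*}$ in Table \ref{TabOC}; matching the constant term then pins down $B$ to the value in the table. In framework (S3)(U2), the identity splits into a coefficient of $\log m$ and a constant; the $\log m$–matching fixes $\theta^{*}$, and the constant matching fixes $B$. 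In all three cases, $A$ is determined from $\theta^{*}$ through the feedback relation $A=(\theta^{*})^{-\sigma}$ (or $A=(\theta^{*})^{-1}$ in the logarithmic case).

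I would write out the computation in detail only for framework (a), say, and then note that (b) and (c) follow by the same scheme with the obvious substitutions (replacing $\Gamma(1-K^{-1}m^{\sigma-1})$ by $m^{\sigma-1}-\delta$ or $\Gamma(1-K^{-1}\log m)$, and adjusting the utility). Since $w$ is $C^{\infty}$ on $\mathbb R_+^n\setminus\{0\}$ and the HJB identity is verified pointwise, $w$ qualifies as a classical solution as claimed.

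There is no conceptual obstacle here: mass conservation kills the only term in which the network structure could obstruct the reduction, and the remaining algebra is bookkeeping. The one subtlety worth flagging is the management of signs and exponents when $\sigma>1$ versus $\sigma<1$ under (U1), so that the identification of $A$, $B$, and $\theta^{*}$ in Table \ref{TabOC} is dimensionally and sign-wise consistent; the hypothesis $\theta^{*}>0$ assumed in Theorem \ref{thm:OC} ensures the feedback $(A\,u'(m))^{-1/\sigma}$ and the constant $A$ are well-defined throughout.
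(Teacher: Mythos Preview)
Your proposal is correct and follows essentially the same approach as the paper: compute $\nabla w(x)=A\,u'(m)\,\mathbf e$, observe that the migration term vanishes (the paper phrases this via $(D+B)\mathbf e=0$, which is equivalent to your use of mass conservation \eqref{eq:consmass}), and then reduce the HJB to a scalar identity in $m$ that is checked by matching coefficients. The paper simply compresses the coefficient-matching step you describe into ``easy but tedious calculations,'' so your version is a faithful (and slightly more explicit) rendering of the same argument.
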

\begin{proof} 
     Note that  $\nabla  w(x)=Au'(\langle x, \mathbf e\rangle)\mathbf e$. We plug such formula into the associated HJB equation, use \eqref{optcontgen}, and note that  $(D+B)\mathbf e=0$ implies
$$\langle \nabla w(x), \alpha(x)\rangle=Au'(\langle x, \mathbf e\rangle)\langle (D+B)\mathbf e,x\rangle=0.$$
Thus, with easy but tedious calculations that
$w$ is a solution to the HJB equation if and only if  $A$ and $B$ are  those described in Table \ref{TabOC}. 
\end{proof}

For the reader convenience we state here the well known fundamental identity on which verification techniques are based. To this extent we set
$$h(c,p)= \sum_{i=1}^f\left(u\left(c_i\right) - c_i p_i\right) ,\quad  H(p)= \sup _{c > 0}h(c,p).$$

\begin{Lemma}[The Fundamental Identity]\label{thm:fundid} Assume $v(x)$ is any classical solution to the HJB equation \eqref{hjbgen}. Then 
 \begin{align*}
    v(x)=\int_0^t\big[H( \nabla v(X(s)))-&h(c(s), \nabla v(X(s)))\big]ds+\int_0^te^{-\rho t}\sum_{i\in F}u(c_i(s))ds+e^{-\rho t}v(X(t)).
\end{align*}
holding for every $t$,   for every control $c\in\mathbb A$ and trajectory $X(t)$.    
\end{Lemma}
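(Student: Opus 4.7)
The plan is to differentiate the discounted value $s\mapsto e^{-\rho s}v(X(s))$ along any admissible trajectory and then integrate from $0$ to $t$. Since $v$ is assumed to be a classical solution of the HJB equation \eqref{hjbgen}, it is in particular $C^1$ on $\mathbb{R}^n_+\setminus\{0\}$, so the chain rule together with the state equation \eqref{vectsys} gives
\[
\frac{d}{ds}\bigl[e^{-\rho s}v(X(s))\bigr]
=e^{-\rho s}\Bigl[-\rho v(X(s))+\bigl\langle\nabla v(X(s)),\varphi(\langle X(s),\mathbf e\rangle)X(s)+\alpha(X(s))-c(s)\bigr\rangle\Bigr].
\]

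Next I substitute the HJB equation to eliminate $\rho v(X(s))$. Rewriting \eqref{hjbgen} as
\[
\rho v(X(s))=H(\nabla v(X(s)))+\bigl\langle\nabla v(X(s)),\alpha(X(s))+\varphi(\langle X(s),\mathbf e\rangle)X(s)\bigr\rangle,
\]
the two drift contributions cancel, and I am left only with the Hamiltonian and the control-cost terms:
\[
\frac{d}{ds}\bigl[e^{-\rho s}v(X(s))\bigr]=-e^{-\rho s}\Bigl[H(\nabla v(X(s)))+\langle\nabla v(X(s)),c(s)\rangle\Bigr].
\]
At this point I invoke the algebraic identity that, by the very definition of $h$ and the convention $c_i\equiv0$ for $i\notin F$,
\[
-\langle\nabla v(X(s)),c(s)\rangle=-\sum_{i\in F}u(c_i(s))+h(c(s),\nabla v(X(s))),
\]
so that
\[
\frac{d}{ds}\bigl[e^{-\rho s}v(X(s))\bigr]=-e^{-\rho s}\bigl[H(\nabla v(X(s)))-h(c(s),\nabla v(X(s)))\bigr]-e^{-\rho s}\sum_{i\in F}u(c_i(s)).
\]

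Finally, integrating between $0$ and $t$ and isolating $v(x)=v(X(0))$ yields the claimed identity. The argument is essentially a routine chain-rule computation, and I do not anticipate a genuine obstacle; the only bookkeeping subtlety is the accounting of the Hamiltonian, which is why I prefer to rewrite $-\langle\nabla v,c\rangle$ through the definition of $h$ rather than expanding directly. No integrability question arises because the identity is required only pointwise in $t$, and the admissibility of $c$ guarantees that $X(\cdot)$ is well defined and nonnegative on $[0,t]$, so that all quantities are finite along the trajectory.
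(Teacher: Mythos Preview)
Your proof is correct and follows essentially the same approach as the paper: differentiate $s\mapsto e^{-\rho s}v(X(s))$ via the chain rule, substitute the HJB equation for $\rho v$, rewrite the control term through the definition of $h$, and integrate over $[0,t]$. The paper condenses these into a two-line computation, whereas you spell out each intermediate step, but the argument is the same.
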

\begin{proof}
One has
\begin{align*}
        \frac{d}{ds}&\left(e^{-\rho s}v(X(s))\right)=-\rho e^{-\rho s}v(X(s))+e^{-\rho t}\langle \nabla v(X(s)),\dot X(s)\rangle\\
        &=e^{-\rho s}\left[-H( \nabla v(X(s)))+ h(c(s),  \nabla v(X(s)))\right]-e^{-\rho s}\sum_{i\in F}u(c_i(s)) 
    \end{align*}
and, integrating on $[0,t]$, obtains the sought identity.
\end{proof}

In the theorems below, we implement verification techniques, proving alongside the uniqueness of the solution of the  HJB equation \eqref{hjbgen}.
We preliminary note that,  there exists a constant $C>0$ such that, for every  initial condition $x\in\mathbb R^n-\{0\}$ and for every admissible control $c\in\mathbb A$,  the associated  trajectory $X(t)=X(t; x,c)$ satisfies
 \begin{equation}\label{eq:growth_X}
\vert X(t)\vert\leq C  (1+\vert x\vert).
 \end{equation}
Indeed the solution $m(t)$ of \eqref{stateqgen} and that for a null extraction, $m_0(t)$, defined in Lemma \ref{lem:Y} satisfy 
\begin{equation}
    \label{eq:bound-for-m}
m(t)\le m_0(t)\le\max\left\{ \sum_ix_i, \bar m\right\}\le C(1+\vert x\vert)
\end{equation}
where the second inequality is implied by the monotonic convergence, from above or below, of  
$m_0(t)$ starting at $m_0(0)=\sum_ix_i$ to the long-run mass $\bar m$ (where in (S1), $\bar m=K^\frac 1{\sigma-1}$; in (S2), $\bar m=e^K$; in (S3), $\bar m=\delta^{-\frac 1{\sigma-1}}$), while the last inequality is true with $C=\sqrt n\max\{1, \bar m\}.$ 
\noindent Hence, since $X_i(t)\ge0$ for all $i$, 
$$\vert X(t)\vert\le \sum_{i=1}^nX_i(t)=m(t)\le C(1+\vert x\vert).$$

\color{black}
\begin{Theorem}\label{thm:ver1} Consider the framework $(U1)(S2)$.
\begin{itemize}
    \item[$(i)$] The value function $V(x)$ is the unique classical solution of the HJB equation \eqref{hjbgen} in the set
$\mathcal C=\{v\in C^1(\mathbb R^n_+-\{0\})\cap C^0(\mathbb R^n_+) \}.$
\item[$(ii)$] Consequently,  $V(x)=w(x)\equiv A u(\langle x, \mathbf{e}\rangle)+B$, \  where $w(x)$ is given by    \eqref{eq:solhjb}, and the unique optimal feedback map  is   $$  U^*(x)=0,\ \  \forall i\not\in F, \quad U^*(x)=\theta^*\langle  x, \mathbf e\rangle,\ \  \forall i \in F.$$
\end{itemize} 
\end{Theorem}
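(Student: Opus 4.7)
The plan is to prove Theorem \ref{thm:ver1} by a verification-and-uniqueness argument built on Lemma \ref{thm:fundid}. Proposition \ref{prop:w} already exhibits $w(x)=Au(\langle \mathbf e,x\rangle)+B$ as a classical solution of the HJB equation \eqref{hjbgen} in $\mathcal C$; the task is to show this is the only such solution, that it coincides with the value function, and that the corresponding feedback is uniquely optimal. I will use the Fundamental Identity twice: once to bound any $v\in\mathcal C$ solving \eqref{hjbgen} from below by the payoff of an arbitrary admissible control, and once along the $\theta^*$-trajectory to obtain the matching equality. The technical backbone is a uniform transversality condition that is essentially free in framework $(U1)(S2)$: by the bound \eqref{eq:bound-for-m} on the total mass, every admissible trajectory satisfies $|X(t)|\le m(t)\le C(1+|x|)$ with $C$ independent of the control, so $X(t)$ stays in a compact subset of $\mathbb R^n_+$; since every $v\in\mathcal C$ is continuous up to the boundary, $v(X(t))$ is bounded and $e^{-\rho t}v(X(t))\to 0$ as $t\to\infty$ under the standing assumption $\rho>0$.

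For the verification half I would apply Lemma \ref{thm:fundid} to any $v\in\mathcal C$ satisfying \eqref{hjbgen} and any admissible pair $(c,X)$. The inequality $H(\nabla v)\ge h(c,\nabla v)$ pointwise, combined with the transversality just established, yields $v(x)\ge J(c;x)$, and taking the supremum gives $v(x)\ge V(x)$; applied to $v=w$ this already produces $w\ge V$. For the matching upper bound I would run the same identity along the trajectory $X^*$ generated by $\psi^*_i(x)=\theta^*\langle \mathbf e,x\rangle$ for $i\in F$ and $\psi^*_i\equiv 0$ otherwise. Section \ref{ssec:cle} ensures $X^*$ exists and is admissible, and a direct computation using $\nabla w(x)=A u'(\langle \mathbf e,x\rangle)\mathbf e$ with $A=(\theta^*)^{-\sigma}$ shows $(\partial w/\partial x_i)^{-1/\sigma}=\theta^*\langle \mathbf e,x\rangle=\psi^*_i(x)$ for $i\in F$; therefore $\psi^*$ realizes pointwise the Hamiltonian maximum at $\nabla w$, the integrand $H(\nabla w)-h(\psi^*,\nabla w)$ vanishes along $X^*$, and Lemma \ref{thm:fundid} collapses to $w(x)=J_t(\psi^*;x)+e^{-\rho t}w(X^*(t))$, where $J_t$ is the payoff truncated at time $t$. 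Passing to $t\to\infty$ gives $w(x)=J(\psi^*;x)\le V(x)$; combining with $w\ge V$ yields $w=V$, and strict concavity of $h$ in $c$ makes the pointwise Hamiltonian maximizer unique, so $\psi^*$ is the unique closed-loop optimum, proving (ii).

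The main obstacle will be uniqueness of the HJB solution within the bare class $\mathcal C$, since the first half of the argument only yields the one-sided bound $v\ge V=w$ for a generic $v\in\mathcal C$. To obtain the reverse I plan to insert the closed-loop Hamiltonian maximizer $\hat c_i(x)=(\partial v/\partial x_i)^{-1/\sigma}$ for $i\in F$, $\hat c_i\equiv 0$ otherwise, extracted from $v$ itself back into Lemma \ref{thm:fundid} along its own trajectory $\hat X$: once admissibility is granted, the identity holds with equality and transversality gives $v(x)=J(\hat c;x)\le V(x)=w(x)$, hence $v=w$. The delicate point is proving that $\hat c$ is admissible for an arbitrary $v\in\mathcal C$; in the plan this would be addressed either by a truncation-approximation scheme (truncating $\hat c$ into admissible Lipschitz feedbacks, using the inequality $v\ge J$ on the approximations and passing to the limit by monotone convergence of the payoff together with uniform transversality on the limiting trajectory), or by an aggregate-reduction argument that exploits $(D+B)\mathbf e=0$ to show that any $C^1$ solution of \eqref{hjbgen} in $\mathcal C$ must depend on $x$ only through $m=\langle \mathbf e,x\rangle$, whereupon \eqref{hjbgen} becomes a first-order scalar ODE in $m$ whose unique classical solution continuous up to $m=0$ is $Au(m)+B$, closing (i) directly.
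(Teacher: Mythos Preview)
Your proposal is correct and follows essentially the same verification-plus-transversality route as the paper: both use the Fundamental Identity (Lemma~\ref{thm:fundid}) together with the uniform bound \eqref{eq:growth_X} and continuity of $v$ on the compact orthant to force $e^{-\rho t}v(X(t))\to 0$, obtaining $v\ge V$ for any classical solution $v\in\mathcal C$.

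The one structural difference is the order of the two halves. The paper proves uniqueness first: for an \emph{arbitrary} $v\in\mathcal C$ solving \eqref{hjbgen}, it extracts the Hamiltonian maximizer $c^*_i=(\partial_{x_i}v)^{-1/\sigma}$ directly from that $v$, runs the Fundamental Identity along the resulting trajectory, and concludes $v(x)=J(c^*;x)\le V(x)$, hence $v=V$; the identification $V=w$ then follows because $w\in\mathcal C$. You instead establish $w=V$ first, using the explicit $\psi^*$ whose admissibility is secured in Section~\ref{ssec:cle}, and treat uniqueness of general $v\in\mathcal C$ as a separate step. The admissibility concern you flag for the feedback $\hat c$ extracted from a generic $v$ is exactly the step the paper glosses over; your truncation or aggregate-reduction plan would make this rigorous, but the paper simply asserts it.
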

\begin{proof} Let $v \in \mathcal C$ be any solution of \eqref{hjbgen}. Let  $x$ be any initial condition, $c\in \mathbb A$ any admissible control, and $X(t)$ the associated trajectory.  Using \eqref{eq:growth_X} and the fact that $v$ is locally bounded on $\mathbb R^n_+$, we have $|v(X(t))| \leq C_{x}$, for all $t \geq 0$. It follows
\begin{equation}\label{eq:behaviour_infty_vX}
    e^{-\rho t} v(X(t)) \xrightarrow{t \to \infty} 0.
\end{equation}
Since the fundamental identity applies and $H(p)\ge h(p)$ for every $p\in\mathbb R^n$, we have
$$v(x)\ge \int_0^te^{-\rho t}\sum_{i\in F}u(c_i(s))ds+e^{-\rho t}v(X(t)), \quad \forall t\ge0$$
so that, letting $t \to \infty$ and using \eqref{eq:behaviour_infty_vX},  we obtain
\begin{equation}
    v(x)\geq \int_0^\infty e^{-\rho t}\sum_{i=1}^f u(c_i(s))ds= J\left(c ; x\right)
\end{equation}
implying $v(x)\ge V(x).$
On the other hand, $H(\nabla v(x))-h(\nabla v(x), c)=0$ 
if and only if $c_i(x)=c_i^*(x)$ where $c_i^*$ is given by \eqref{optcontgen}. If we set $X^*(t)=X(t; x, c^*)$ then
\begin{align}\label{eq:v=int_+e_rho_t_vX*}
    v(x)= \int_0^te^{-\rho t}\sum_{i=1}^f  u(c^*_i(s))ds+e^{-\rho t}v(X^*(t)),\end{align}
and letting $t \to \infty$  we have 
\begin{align}
    v(x)=\int_0^\infty e^{-\rho t}\sum_{i=1}^f  u(c^*_i(s))ds= J\left(c^* ; x\right).
\end{align}
This implies that $c^*$ is  optimal  and $v(x)=V(x)$. In particular, we have uniqueness of the classical solution to the HJB equation in the set $\mathcal C$. Since $w(x)$ defined by \eqref{eq:solhjb} is a solution in $\mathcal C$, then $v(x)=V(x)=w(x)$,  and $c^*$ is an optimal feedback control.

Moreover, $c^*$ is the unique optimal  control. Indeed, let $(\tilde c, \tilde X)$ be another optimal couple, where $\tilde X(t)=X(t;x,\tilde c)$. Then 
$V(x)=J(\tilde c; x).$ Now choose choose $v=V, X=\tilde X, c=\tilde c$ in the fundamental identity, let $t \to \infty$, and use  \eqref{eq:behaviour_infty_vX}. We derive
$$\int_0^\infty\big[H( \nabla V(\tilde X(s)))-h(\tilde c(s), \nabla V(\tilde X(s)))\big]ds=0,$$
and,  since $H(p) \geq h(p,c)$, this implies
   $$H( \nabla V(\tilde X(s)))-h(\tilde c(s), \nabla V(\tilde X(s)))=0, \quad  {\textit{for a.e. }} s \geq 0.$$ But then  $(\tilde c,\tilde X)$ must satisfy \eqref{optcontgen} with $v=V$, so that $\tilde c$ has to coincide with the unique maximizer $c^*$. 

\end{proof}

Our target set of solution $v$ to the HJB equation \eqref{hjbgen} changes in frameworks (U1)(S1) and (U2)(S3). We will make the following assumptions on $v$:

\begin{itemize}
    \item [(V1)] For any admissible couple  $(X,c)$ such that $J(x,c)>-\infty$, it is 
\begin{equation}\label{eq:limsup}
 \limsup_{t \to \infty}  \ e^{-\rho t} v(X(t)) \geq 0,
\end{equation}    \item [(V2)] If $(X^*, c^*)$ is the admissible couple satisfying \eqref{optcontgen}, then
\begin{equation}\label{eq:lim_X*}
   \lim_{t \to \infty} e^{-\rho t} v(X^*(t)) = 0,
\end{equation}
\end{itemize}
\begin{Theorem}\label{thm:ver2}
    Consider the frameworks $(U1)(S1)$ and $(U2)(S3)$.

    \begin{itemize}
        
    \item[$(i)$] The value function $V(x)$ is the unique classical solution of the HJB equation \eqref{hjbgen} in the set
 $$\tilde{\mathcal C}
 =\{v\in C^1(\mathbb R^n_+-\{0\}): (V1)(V2)\ \textrm{hold}\}.$$
\item [$(ii)$]  Consequently,  $V(x)=w(x)\equiv A u(\langle x, \mathbf{e}\rangle)+B$, \  where $w(x)$ is given by    \eqref{eq:solhjb}, and the unique optimal feedback map  is   $$  \psi^*(x)=0,\ \  \forall i\not\in F, \quad \psi^*(x)=\theta^*\langle  x, \mathbf e\rangle,\ \  \forall i \in F.$$
\end{itemize}
 \end{Theorem}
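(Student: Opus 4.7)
The plan is to mirror the proof of Theorem \ref{thm:ver1}, with the boundedness of $v$ — which in framework (U1)(S2) automatically delivered $e^{-\rho t}v(X(t))\to 0$ via \eqref{eq:growth_X} — replaced by the abstract transversality conditions (V1) and (V2) that cut out the class $\tilde{\mathcal C}$. Fix any classical solution $v\in\tilde{\mathcal C}$ of the HJB. For an admissible $c\in\mathbb A$ with $J(c;x)>-\infty$, Lemma \ref{thm:fundid} combined with the pointwise bound $H\ge h$ gives
\[
v(x)\;\ge\;\int_0^t e^{-\rho s}\sum_{i\in F}u(c_i(s))\,ds+e^{-\rho t}v(X(t)).
\]
Taking $\limsup$ as $t\to\infty$, the integral tends to $J(c;x)$ and (V1) yields $\limsup e^{-\rho t}v(X(t))\ge 0$, so $v(x)\ge J(c;x)$; the case $J=-\infty$ is trivial, hence $v\ge V$.

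For the reverse inequality, plug $v$ into \eqref{optcontgen} to define a candidate feedback $\psi^*$; by the Corollary after \eqref{eq:sism^*} the closed-loop equation has a unique trajectory $X^*$, and along $(c^*,X^*)$ the integrand $H-h$ vanishes, so the Fundamental Identity becomes an equality. Condition (V2) then allows us to pass to the limit in $t$, yielding $v(x)=J(c^*;x)\le V(x)$. Combining with the previous step, $v=V$, which proves uniqueness in $\tilde{\mathcal C}$ and optimality of the feedback $c^*$. Uniqueness of the optimal control follows verbatim from the final paragraph of the proof of Theorem \ref{thm:ver1}, since strict concavity of $h$ in $c$ forces any competing optimizer to agree almost everywhere with $c^*$.

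It remains to exhibit an element of $\tilde{\mathcal C}$ solving the HJB. By Proposition \ref{prop:w}, $w(x)=A\,u(\langle x,\mathbf e\rangle)+B$ is a $C^1$ classical solution; I would verify (V1) and (V2) directly. Condition (V2) is straightforward: along the candidate optimal trajectory Proposition \ref{lem:m^*} gives $m^*(t):=\langle X^*(t),\mathbf e\rangle\to\hat m\in(0,\infty)$, so $w(X^*(t))$ has a finite limit and is annihilated by $e^{-\rho t}$. The hard part — and the main obstacle — is verifying (V1) for $w$ in framework (U1)(S1), where $u$ is unbounded below and $w<0$ everywhere, so one must exclude admissible trajectories with $J(c;x)>-\infty$ along which $m(t)$ collapses fast enough to make $e^{-\rho t}w(X(t))\to-\infty$. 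The strategy is to exploit the state equation together with the finiteness of $J(c;x)$ (via H\"older/Jensen applied to $\sum_{i\in F}u(c_i)$) to show that any such collapse would force the extraction profile to contradict $J>-\infty$, and in fact pins the decay rate of $m(t)$ strictly below $\rho/(\sigma-1)$, which is precisely the threshold at which $e^{-\rho t}m(t)^{1-\sigma}$ vanishes. The analogous argument for (U2)(S3) is easier, since $\log$ replaces the power and only logarithmic growth of $|w|$ has to be ruled out.
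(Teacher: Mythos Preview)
Your proposal is correct and follows essentially the same approach as the paper's own proof: the verification argument via the Fundamental Identity with (V1)/(V2) replacing the boundedness used in Theorem \ref{thm:ver1}, followed by checking that $w\in\tilde{\mathcal C}$, with (V2) immediate from Proposition \ref{lem:m^*} and (V1) established (in the (U1)(S1) case) by a Jensen-type estimate on $\sum_{i\in F}u(c_i)$ over unit intervals combined with a Gronwall bound from the mass equation to force $e^{-\rho T}m(T)^{1-\sigma}\to 0$ whenever $J(c;x)>-\infty$. The paper also remarks that the (U2)(S3) case is handled similarly, as you anticipated.
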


\begin{proof}
Let $v$ be any classical solution of the HJB equation in $\tilde{\mathcal C}$. By proceeding as in the proof of Theorem \ref{thm:ver1}, but using \eqref{eq:limsup}, \eqref{eq:lim_X*} in place of \eqref{eq:behaviour_infty_vX}, we get that $v=V$ and that the control $c^*=\psi(X^*(\cdot))$ and the associated trajectory $X^*$ as defined in \eqref{eq:optccl}, are optimal. Notice that if  $J(x,c)=-\infty$, we always have the trivial inequality $v(x)\geq  -\infty=J(x,c)$.

Then, to prove that the value function can be expressed as the solution $w(x)$ of the HJB equation \eqref{eq:solhjb}, it is enough to show that $w\in\tilde{\mathcal C}$. Clearly (V2) holds as, for $m^*$ given by Lemma \ref{lem:m^*} with $\theta=\theta^*$, one has
$$w(X^*(t))=Au(m^*(t))+B \xrightarrow{t \to \infty} Au(\hat m)+B.$$
Next we prove that $w(x)$ satisfies  (V1)  in the case of framework (U1)(S1) (the case (U2)(S3) can be treated similarly). We consider an initial state $x$ an an admissible control ${c}(t)$, with associated state ${X}(t)$ and mass $m(t)=\langle X(t), \mathbf e\rangle$,  such that $J(x, c)>-\infty$, and prove \begin{equation}\label{eq:liminf}
 \limsup_{t \to \infty}  \ e^{-\rho t}\left(\frac A{1-\sigma}m(t)+B\right)\geq 0
 \iff \liminf_{t \to \infty}   e^{-\rho t}   m(t)\le 0.
\end{equation}
with $\sigma>1$ and $A,B$ as in   Table \ref{TabOC}.  
We start by rewriting $J(x,c)$ as 
\begin{align*}
  J(x,&c)=
 -\frac 1{\sigma-1}\sum_{T=0}^\infty \int_{T}^{T + 1} e^{-\rho t} \sum_{i\in F} ({c}_i(t))^{1 - \sigma} dt \le -\frac{{f^\sigma}}{\sigma-1}\sum_{T=0}^\infty \int_{T}^{T + 1} \kern-7pt e^{-\rho t} \left(\sum_{i\in F} {c}_i(t)\right)^{1 - \sigma} \kern-10pt dt\\
 &
 \le-\frac{{f^\sigma}}{\sigma-1}\sum_{T=0}^\infty \delta_T\left(\int_{T}^{T + 1} \kern-10pte^{-\rho t} \delta_T^{-1}\sum_{i\in F} {c}_i(t) dt\right)^{1 - \sigma}=-\frac{{f^\sigma}}{\sigma-1}\sum_{T=0}^\infty \delta_T\left(\int_{T}^{T + 1} \kern-10pt e^{-\rho t} \delta_T^{-1}\sum_{i\in F} {c}_i(t) dt\right)^{1 - \sigma}
\end{align*}
where the first inequality follows by Jensen's inequality and in the last one we have again used Jensen's inequality with  
unitary measure on $[T,T+1]$ of density
$f(t)=e^{-\rho t}{\delta_T}^{-1}$ with $\delta_T=\int_T^{T+1}e^{-\rho t}dt=\frac{e^\rho-1}{\rho e^\rho} e^{-\rho T}$.

Now, integrating the equation for $m(t)$  in \eqref{stateqgen} over   $[T, T+1]$, and using the fact that $ m(t)\ge0$ and $\varphi(m(t))\le \Gamma$, we derive
\begin{align*}
  \int_{T}^{T+1}   \langle {c}(t), \mathbf{e} \rangle& dt\le\int_{T}^{T+1} \kern-10pt\varphi(m(t))m(t)dt-m(T+1)+m(T)\\
  &\le m(T)+\Gamma\int_{T}^{T+1}  m(t)dt\le  m(T)+(e^\Gamma-1)m(T)=e^\Gamma m(T),
\end{align*}
where in the last inequality we used $m'(t)\le\Gamma m(t)$ and Gronwall's Lemma on  $[T, T+1]$. Combining the two chains of inequalities above, and recalling that $c_i=0$ for all $i\not\in F$,  and that $x\mapsto x^{1-\sigma}$ with $\sigma>1 $ is decreasing, we obtain
$$J(x,c)\le -\frac{{f^\sigma} }{\sigma-1}\sum_{T=0}^\infty  \delta_T^\sigma\left( e^{-\rho T}\int_{T}^{T + 1}  \langle c(t), \mathbf e\rangle dt\right)^{1 - \sigma}\le -\frac{{f^\sigma}  }{\sigma-1}\sum_{T=0}^\infty  \delta_T^\sigma \left(e^{-\rho T}e^\Gamma m(T)\right)^{1 - \sigma} $$
 $$= -\frac{{e^{\Gamma(1-\sigma)} } }{\sigma-1}\left(\frac{f(e^\rho-1)}{\rho e^\rho}\right)^\sigma\sum_{T=0}^\infty   e^{-\rho T}\left(m(T)\right)^{1 - \sigma}.$$
Note that the series above needs to be finite (and positive) as $J(x,c)>-\infty$. Thus, necessarily
$$\lim_{T\to\infty}e^{-\rho T}\left(m(T)\right)^{1 - \sigma}=0,$$
and the proof of (V1) is complete.

Lastly, we prove that $c^*$ is the unique optimal control. Indeed, let $(\tilde c, \tilde X)$ be another optimal couple, where $\tilde X(t)=X(t;x,\tilde c)$. Then 
$V(x)=J(\tilde c; x).$ Now choose  $v=V, X=\tilde X, c=\tilde c$ in the fundamental identity, take $\limsup_{t \to \infty}$, and use  \eqref{eq:limsup}, then
$$\int_0^\infty\big[H( \nabla V(\tilde X(s)))-h(\tilde c(s), \nabla V(\tilde X(s)))\big]ds\leq 0.$$
Thus, we must have $\tilde c=c^*$.
\end{proof}

 \subsection{Other Proofs}
\label{subapp:otherproofs}

\begin{proof}[\textbf{Proof of Lemma \ref{lem:eigenBB}}]
           The matrix $E$ can be written also as the product of the  two vector matrices $\xi=\sum_{i\in F}e_i$ (where $\{e_i\}$ is the canonical base in $\mathbb R^n$) and $\mathbf e^\top$, namely 
 $E=\xi \mathbf{e}^\top.$
Moreover, the range of $E^\top$ is $\mathrm{Im}E=\{\alpha\mathbf{e}\, :\, \alpha\in \mathbb R\}$, $\mathbf{e}$ is an eigenvector of $E^\top$ with eigenvalue $\langle \mathbf e, \xi\rangle=f$. Thus $\mathbf{e}$ is an eigenvector of $D+B-\theta E^\top+f\theta I$ with eigenvalue 0. This implies the existence of an eigenvector $\zeta_\theta$ of the transpose matrix associated to the same eigenvalue 0, and $(i)$ is proved. 

 To prove $(ii)$ we first show that any generalized eigenvector $v$ of $D+B-\theta E^\top+f\theta I$ with an  eigenvalue $\lambda_i \neq0$ is orthogonal to $\mathbf e$. Thus, consider an eigenvalue $\lambda_i \neq 0$ ($i\geq 2$) and $v_i$ as an element of the generalized eigenspace $V(\lambda_{i})$. Hence, there exist a strictly positive integer $m$ such that $(D+B-\theta E^\top+f\theta I-\lambda_iI)^{m}v_i  = {0}$. Therefore,
 $$0=\mathbf e^\top\left [(D+B-\theta E^\top+f\theta I-\lambda_iI)^m v_i \right ]=  (0-\lambda_i)^m\mathbf e^\top \;v_i,$$ 	    and given that $\lambda_i\not=0$, then $\mathbf e^\top\;v_i=0$, implying $\eta$ is orthogonal to $v_i$.\\ Thus, since $Ev_i=\xi\langle\mathbf e, v_i\rangle=0$,  $v_i$ is also a generalized eigenvector for $D+B-\theta E^\top+f\theta I$ associated with the eigenvalue $\lambda_i$, implying the second assertion.
\end{proof}

\begin{proof}[\textbf{Proof of Proposition \ref{pr:lrs}}]
   
(i) As discussed immediately before the proposition, we know that for $0<\theta<\min\{\theta_1,\theta_2\}$
the matrix $M:=D+B^\top-\theta E+f\theta I$ has a simple eigenvalue $0$ with left eigenvector $e$
and right eigenvector $\zeta_\theta\gg0$, and all other eigenvalues have strictly negative real part.
Hence the direction of the trajectory converges to the (unique) dominant eigenvector:
$Y(t)\to \zeta_\theta/\langle e,\zeta_\theta\rangle$, and $\zeta_\theta$ is strictly positive.

(ii) We look at $Y(t)$. Note first that if $Y(0)=\zeta_\theta/\langle e,\zeta_\theta\rangle$, then $Y(t)\equiv\zeta_\theta/\langle e,\zeta_\theta\rangle$ for all $t\ge0$.

Let $S:=\{y\in\mathbb{R}^n:\langle e,y\rangle=1\}$ and set $\tilde\zeta:=\zeta_\theta/\langle e,\zeta_\theta\rangle$.
Since $M$ has a simple eigenvalue $0$ with left eigenvector $e$ and right eigenvector $\zeta_\theta\gg0$, 
while all other eigenvalues have strictly negative real part, there exist $C,\eta>0$ such that,
writing $P$ for the projector onto the direction of $\zeta_\theta$, we have
\[
\|e^{Mt}-P\|\le C e^{-\eta t}\qquad\text{for all }t\ge0.
\]
Hence $e^{Mt}y\to\tilde\zeta$ uniformly on compact subsets of $S$; taking a small closed ball 
$K_L:=\{y\in S:\|y-\tilde\zeta\|\le L\}$ one obtains $e^{Mt}y\in\mathbb{R}^n_{++}$ for all $t\ge0$ and all $y\in K_L$,
which is exactly the cone condition stated for $x$ (via $Y(0)=x/\langle x,e\rangle$). The claim follows.
\end{proof} 
\begin{proof}[\textbf{Proof of Theorem \ref{th:ME}}]
Regarding $(i)$, the Lispschitz-continuity is obvious since $\psi^*$ is linear. For the boundary inequality we proceed as follows. If $i$ is not in $F$ then $\psi^*_i=0$ and there the inequality is verified.
So fix $i\in F$ and $x\in\mathbb R_+^n$ with $x_i=0$. Denote by $\underline b$ the $min_{i\neq j} b_{ij}$, 
\[
\big\langle (D+B^\top)x, e_i\big\rangle
=\sum_{j\neq i} b_{ji}x_j - \Big(\sum_{j\neq i} b_{ij}\Big)x_i
=\sum_{j\neq i} b_{ji}x_j
\;\ge\; \underline b \sum_{j\neq i} x_j .
\]
Since $x_i=0$, $\sum_{j\neq i} x_j=\langle \mathbf e,x\rangle$. Therefore,
\[
\big\langle (D+B^\top)x, e_i\big\rangle
\;\ge\; \underline b\,\langle \mathbf e,x\rangle
\;>\; \hat\theta\,\langle \mathbf e,x\rangle
\;=\; \psi_i(x),
\]
where the strict inequality is implied by $0<\hat\theta<\underline b$ since $\langle \mathbf e,x\rangle>0$.

Now we prove $(ii)$ and $(iii)$.

Fix $x\in\mathbb R_+^n$. To show the claim, it is enough to  suppose all players $j\in F$ other than $i$ use the affine stationary feedback $\psi_j(x)=\hat\theta\,\langle \mathbf e,x\rangle=\hat\theta\, m$, where $\hat\theta$ is as in Table \ref{Tabellone},  to show that player $i$'s best response is $\psi_i(x)=\hat\theta m$, and that the associated value function has the form $V^i(x)=A\,u(m)+B$, with the corresponding constants $A,B$ in Table \ref{Tabellone}. 

\medskip
Let us consider the problem of player $i$ against $\psi_{-i}$, and denote by $v^i$ the value function of player $i$. The HJB reads
\begin{multline}\label{eq:HJB-i}
\rho\,v^i(x)=\max_{c_i\ge 0}\Big\{u(c_i)-c_i\,\partial_{x_i}v^i(x)\Big\}
+\big\langle \nabla v^i(x),(D+B^\top)x+\varphi(\langle\mathbf e,x\rangle)\,x\big\rangle
\\ -\sum_{j\in F\setminus\{i\}}\partial_{x_j}v^i(x)\,\hat\theta\, \langle \langle\mathbf e,x\rangle.
\end{multline}
We verify that, for a suitable choice of the constants $A$ and $B$, the function
\[
v^i(x)=A\,u(\langle\mathbf e,x\rangle)+B =A\,u(m)+B
\]
is a solution of such an equation. Indeed, we have
\[
v^i(x)=A\,u'(m)\,\mathbf e,\quad p_i:=\partial_{x_i}v^i=A\,u'(m).
\]
So the above HJB equation \eqref{eq:HJB-i} is rewritten as
\begin{equation}\label{eq:HJB-verify}
\rho\,v^i(x)=\max_{c_i\ge0}\{u(c_i)-c_i\,p_i\}+\big\langle\nabla v^i,(D+B^\top)x+\varphi(m)\,x\big\rangle
-\sum_{j\in F\setminus\{i\}}\partial_{x_j}v^i\,\hat\theta\,m .
\end{equation}
Since $(D+B)\mathbf e=\mathbf 0$, we have $\langle\nabla v^i,(D+B^\top)x\rangle=0$, and
\[
\big\langle\nabla v^i,\varphi(m)\,x\big\rangle=A\,u'(m)\,\varphi(m)\,m, \qquad
\sum_{j\ne i}\partial_{x_j}v^i\,\hat\theta\,m=(f-1)\,A\,u'(m)\,\hat\theta\,m .
\]
All in all, in each case we need to prove that
\begin{equation}\label{eq:HJB-identity}
  \rho\big[A\,u(m)+B\big]
  \;=\;
  H\!\big(A\,u'(m)\big)
  \;+\; A\,u'(m)\,\varphi(m)\,m
  \;-\;(f-1)\,A\,u'(m)\,\hat\theta\,m,
\end{equation}
where $H(p)=\sup_{c\ge0}\{u(c)-cp\}$. We remark that the maximizer is $c_i^*=(u')^{-1}(p_i)$.

We now verify, case by case, that:
\begin{itemize}
\item[(a)] $c_i^*(x)=\hat\theta\,m$ (best response exactly as claimed);
\item[(b)] the identity \eqref{eq:HJB-identity} holds with $v^i(x)=A\,u(m)+B$,
\end{itemize}
using the given $(\hat\theta,A,B)$ from Table \ref{Tabellone}.

\smallskip
\noindent \textbf{Case (U1)-(S1)}: $u(c)=\dfrac{c^{1-\sigma}}{1-\sigma}$, $\sigma\neq1$ and $\varphi(m)=\Gamma\!\left(1-\frac1K m^{\sigma-1}\right)$.

Here $u'(c)=c^{-\sigma}$, $(u')^{-1}(p)=p^{-1/\sigma}$. Then
\[
c_i^*=(A\,m^{-\sigma})^{-1/\sigma}=A^{-1/\sigma}m.
\]
With the prescribed $A=\hat\theta^{-\sigma}$ we get \emph{(a)} $c_i^*=\hat\theta\,m$.
The maximized Hamiltonian equals $H(p_i)=\frac{\sigma}{1-\sigma}p_i^{1-1/\sigma}=\frac{\sigma}{1-\sigma}A^{1-\frac1\sigma}m^{1-\sigma}
=\frac{\sigma}{1-\sigma}\hat\theta^{1-\sigma}m^{1-\sigma}$.
Moreover
\[
A\,u'(m)\,\varphi(m)\,m
=A\,m^{-\sigma}\Big[\Gamma m-\frac{\Gamma}{K}m^\sigma\Big]
=A\,\Gamma\,m^{1-\sigma}-\frac{A\Gamma}{K},
\]
and
\[
(f-1)\,A\,u'(m)\,\hat\theta m=(f-1)\,A\,\hat\theta\,m^{1-\sigma}.
\]
The HJB \eqref{eq:HJB-verify} reduces to two scalar identities:
\[
\ \frac{\rho A}{1-\sigma}=\frac{\sigma}{1-\sigma}\hat\theta^{1-\sigma}+A\Gamma-(f-1)A\hat\theta\quad\text{(coeff.\ of $m^{1-\sigma}$),}
\]
and
\[
\ \rho B=-\frac{A\Gamma}{K}\quad\text{(constant term).}
\]
Substituting $A=\hat\theta^{-\sigma}$ and $\hat\theta=\dfrac{\rho+\Gamma(\sigma-1)}{1+f(\sigma-1)}$ (Table \ref{Tabellone}) both equalities hold identically, hence \eqref{eq:HJB-verify} is satisfied and (b) follows.

\smallskip
\noindent 
\textbf{Case (U1) - (S2)}: $u(c)=\dfrac{c^{1-\sigma}}{1-\sigma}$, $\sigma\in(0,1)$; and $\varphi(m)=m^{\sigma-1}-\delta$.
Again $(u')^{-1}(p)=p^{-1/\sigma}$, hence with $A=\hat\theta^{-\sigma}$ we get \emph{(a)} $c_i^*=\hat\theta\,m$.
Here
\[
H(p_i)=\frac{\sigma}{1-\sigma}\hat\theta^{1-\sigma}m^{1-\sigma},
\]
\[A\,u'(m)\,\varphi(m)\,m=A\,[1-\delta\,m^{1-\sigma}],
\]
and
\[
(f-1)\,A\,u'(m)\,\hat\theta m=(f-1)\,A\,\hat\theta\,m^{1-\sigma}.
\]
Thus \eqref{eq:HJB-verify} is equivalent to the system of the two following equations:
\[
\ \frac{\rho A}{1-\sigma}=\frac{\sigma}{1-\sigma}\hat\theta^{1-\sigma}-\delta A-(f-1)A\hat\theta \quad\text{(from the coeff.\ of $m^{1-\sigma}$)},
\]
\[
\ \rho B=A \quad\text{(from the constant term).}
\]
Substituting $A=\hat\theta^{-\sigma}$ and $\hat\theta=\dfrac{\rho+\delta(1-\sigma)}{1-f(1-\sigma)}$ (Table \ref{Tabellone}) the two identities are verified, hence (b) holds.

\smallskip
\noindent\textbf{Case (U2) - (S3)}: $u(c)=\ln c$;  $\varphi(m)=\Gamma\!\left(1-\frac1K\ln m\right)$.

Here $u'(c)=1/c$, $(u')^{-1}(p)=1/p$, so $c_i^*=\dfrac{m}{A}=\hat\theta\,m$ with the prescribed $A=\hat\theta^{-1}$, giving \emph{(a)}.

Moreover
\[
H(p_i)=-\ln p_i-1=\ln m-\ln A-1,
\]
\[
A\,u'(m)\,\varphi(m)\,m=A\,\Gamma-\frac{A\Gamma}{K}\,\ln m,
\]
and
\[
\sum_{j\ne i}\partial_{x_j}v^i\,\hat\theta m=(f-1)\,A\,\hat\theta=(f-1).
\]
Matching the coefficients of $\ln m$ and the constants in \eqref{eq:HJB-verify} yields
\[
\ \rho A=1-\frac{A\Gamma}{K}\quad\text{(coeff.\ of $\ln m$),}
\qquad
\rho B=-\ln A-1+A\Gamma-(f-1)\quad\text{(constant term).}
\]
With $A=\hat\theta^{-1}$ and $\hat\theta=\rho+\Gamma/K$ (Table \ref{Tabellone}), both identities are satisfied, hence (b) holds.

\medskip
\noindent\emph{Verification and conclusion.}
In each case above, the candidate $v^i(x)=A\,u(\langle\mathbf e,x\rangle)+B$ with the corresponding $A,B$ solves \eqref{eq:HJB-i} with maximizer $c_i^*(x)=(u')^{-1}(\partial_{x_i}v^i(x))=\hat\theta\,\langle\mathbf e,x\rangle$. Arguing exactly as in Theorems \ref{thm:ver1} and \ref{thm:ver2} (applied with $(V1)$--$(V2)$ where needed) this $v^i$ coincides with the value function and $c_i^*$ is the unique optimal Markovian feedback for player $i$ against $\psi_{-i}$. Since $i$ was arbitrary and all players share the same primitives, the strategy profile
\[
\psi_i^*(x)=
\begin{cases}
\hat\theta\,\langle\mathbf e,x\rangle,& i\in F,\\
0,& i\notin F,
\end{cases}
\]
is a Markovian equilibrium, and all players share the same value $V^i(x)=A\,u(\langle\mathbf e,x\rangle)+B$. This proves items $(i)$--$(ii)$.
\end{proof}

\end{small}

\begin{small}
\paragraph{\textbf{Acknowledgments.}} Filippo de Feo acknowledges funding by the Deutsche Forschungsgemeinschaft (DFG, German Research Foundation) – CRC/TRR 388 "Rough Analysis, Stochastic Dynamics and Related Fields" – Project ID 516748464 and by INdAM (Instituto Nazionale di Alta Matematica F. Severi) - GNAMPA (Gruppo Nazionale per l'Analisi Matematica, la Probabilità e le loro Applicazioni).
\end{small}
\bigskip
\end{document}